\newtheorem{theorem}{Theorem}[section]
\newtheorem{lemma}[theorem]{Lemma}
\newtheorem{proposition}[theorem]{Proposition}
\newtheorem{corollary}[theorem]{Corollary}
\newtheorem{conj}[theorem]{Conjecture}
\newcommand{\Cay}{\mathrm{Cay}}
\newcommand{\C}{\mathrm{C}}
\newcommand{\V}{\mathrm{V}}
\newcommand{\Fix}{\mathop{\textrm{Fix}}}
\newcommand{\E}{\mathrm{E}}
\newcommand{\SG}{\mathrm{SG}}
\newcommand{\im}{\mathrm{im}}
\newcommand{\rk}{\mathrm{dim}}
\newcommand{\rr}{\mathrm{rk}}
\newcommand{\Aut}{\mathrm{Aut}}
\newcommand{\Sym}{\mathrm{Sym}}
\newcommand{\NM}{\mathcal{N}_m}
\newcommand{\GL}{\mathrm{GL}}
\def\nor#1#2{{\bf N}_{#1}(#2)}
\def\cen#1#2{{\bf C}_{#1}(#2)}
\numberwithin{equation}{section}
\title[Enumerating vertex-transitive graphs]{Asymptotic enumeration of vertex-transitive graphs of fixed valency}
\author[P. Poto\v{c}nik]{Primo\v{z} Poto\v{c}nik}
\address{Primo\v{z} Poto\v{c}nik, Institute of Mathematics, Physics, and
  Mechanics, \newline
Jadranska 19, 1000 Ljubljana, Slovenia}\email{primoz.potocnik@fmf.uni-lj.si}
\author[P. Spiga]{Pablo Spiga}
\address{Pablo Spiga, Dipartimento di Matematica Pura e Applicata,\newline
University of Milano-Bicocca,
Via Cozzi~53, 20126 Milano Italy}  \email{pablo.spiga@unimib.it}
\author[G. Verret]{Gabriel Verret}
\address{Gabriel Verret, Faculty of Mathematics, Natural Sciences and Info. Tech.,  \newline
University of Primorska, Glagolja\v{s}ka 8, 6000 Koper, Slovenia}
\email{gabriel.verret@fmf.uni-lj.si}
\subjclass[2000]{Primary 20B25; Secondary 05E18}
\keywords{cubic, $3$-valent, vertex-transitive, Cayley, GRR, enumeration}
\begin{document}
\begin{abstract}
Let $G$ be a group and let $S$ be an inverse-closed and identity-free generating set of $G$. The \emph{Cayley graph} $\Cay(G,S)$ has vertex-set $G$ and two vertices $u$ and $v$ are adjacent if and only if  $uv^{-1}\in S$. Let $CAY_d(n)$ be the number of  isomorphism classes of $d$-valent Cayley graphs of order at most $n$. We show that $\log( CAY_d(n))\in\Theta (d(\log n)^2)$, as $n\to\infty$. We also obtain some stronger results in the case $d=3$.
\end{abstract}
\maketitle

\section{Introduction}
Unless explicitly stated otherwise, all graphs considered in this paper are finite, connected and simple (undirected, loopless and with no multiple edges). Given a graph $\Gamma$, its vertex-set is denoted by $\V(\Gamma)$, and $|\V(\Gamma)|$ is called the \emph{order} of $\Gamma$. The automorphism group of $\Gamma$ is denoted by $\Aut(\Gamma)$, and $\Gamma$ is said to be $G$-\emph{vertex-transitive} if $G$ is a subgroup of $\Aut(\Gamma)$ acting transitively on the vertex-set of $\Gamma$.  When $G=\Aut(\Gamma)$, the prefix $G$ in the above notation is sometimes omitted.

Clearly, a vertex-transitive graph is regular, that is, all its vertices have the same valency. We are interested in the problem of enumerating vertex-transitive graphs of a fixed valency $d\geq 3$. Let $vt_d(n)$ denote the number of isomorphism classes of $d$-valent vertex-transitive graphs of order $n$. Note that $vt_d(n)$ behaves very irregularly as a function of $n$. For example, it can be seen that $vt_d(n)$ is relatively small if $n$ is a prime, while $vt_d(n)$ is relatively large if $n$ is divisible by a large power of a prime. Finding an exact formula for $vt_d(n)$ seems out of reach anyway.

To avoid both of these issues with $vt_d(n)$, we consider instead the asymptotic behaviour of its summatory function: let $VT_d(n)$ denote the number of isomorphism classes of $d$-valent vertex-transitive graphs of order at most $n$. Our goal is then to understand the asymptotic behaviour of $VT_d(n)$ as $n\to\infty$.

We will also be interested in enumerating some special families of vertex-transitive graphs. A graph is called a $\emph{Cayley graph}$ if it admits a group of automorphisms acting regularly on its vertices. A graph is called a \emph{graphical regular representation} or \emph{GRR} if its full automorphism group acts regularly on its vertices. Clearly, a Cayley graph is necessarily vertex-transitive and a GRR is necessarily a Cayley graph.
Let $CAY_d(n)$ (respectively $GRR_d(n)$) denote the number of  isomorphism classes of $d$-valent Cayley graphs (respectively GRR) of order at most $n$. The first main result of our paper is the following.

\begin{theorem}\label{Main1}
There exist positive constants $a$, $b$ and $c$ such that, for every $d\geq 3$,
$$n^{a d\log n}-c\leq CAY_d(n)\leq n^{bd\log n}.$$
\end{theorem}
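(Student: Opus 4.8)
The plan is to reduce both inequalities to the enumeration of $d$-generated groups of bounded order. If $\Gamma=\Cay(G,S)$ is connected and $d$-valent then $|S|=d$ and $\langle S\rangle=G$, so $G$ is generated by at most $d$ elements; conversely every such pair $(G,S)$ produces such a $\Gamma$. Writing $g_d(m)$ for the number of isomorphism classes of groups of order $m$ generated by at most $d$ elements, a Cayley graph of order exactly $m$ is obtained by choosing such a group and a $d$-subset of it, so
\[
CAY_d(n)\le \sum_{m\le n} g_d(m)\binom{m}{d}\le n^{d+1}\sum_{m\le n}g_d(m),
\]
and it suffices to prove $\sum_{m\le n}g_d(m)\le n^{b'd\log n}$ for a constant $b'$, and to prove a matching lower bound for the number of such groups that additionally admit a $d$-valent GRR.

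For the upper bound on $g_d(m)$ I would run the standard chief-series analysis while tracking the dependence on $d$: a group $H$ of order $m$ has a chief series of length at most $\log_2 m$, and is determined by the isomorphism types of the chief factors (few choices), the conjugation action of each successive quotient on the next chief factor, and the relevant extension data. Because every quotient of $H$ is again $d$-generated, each such action is pinned down by the images of $d$ generators and hence has at most $m^{O(d)}$ possibilities, and the relevant second cohomology groups also have size $m^{O(d)}$; over at most $\log_2 m$ steps this yields $g_d(m)\le m^{O(d\log m)}$, whence $CAY_d(n)\le n^{O(d\log n)}$.

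For the lower bound I would pass through graphical regular representations, using $CAY_d(n)\ge GRR_d(n)$. Since a $d$-valent GRR $\Gamma=\Cay(G,S)$ satisfies $\Aut(\Gamma)\cong G$, non-isomorphic groups produce non-isomorphic GRRs, and hence
\[
CAY_d(n)\ \ge\ \#\{\,[G]:|G|\le n,\ G\text{ admits a }d\text{-valent GRR}\,\}.
\]
By the classification of groups without a GRR (abelian groups of exponent greater than $2$, generalized dicyclic groups, and thirteen small exceptions), for an odd prime $p$ every non-abelian $p$-group admits a GRR, and the number of abelian $p$-groups of order at most $n$ is only $n^{O(1)}$; combined with the fact that the known GRR constructions can be arranged to have any valency somewhat above the minimum number of generators, essentially every $d$-generated $p$-group of order at most $n$ (of the appropriate parity — the odd valencies, and in particular $d=3$, being handled separately, as announced in the abstract) admits a $d$-valent GRR. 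The lower bound therefore follows once we exhibit at least $n^{a'd\log n}$ pairwise non-isomorphic $d$-generated groups of order at most $n$; my approach would be to take a relatively free nilpotent group $F$ of exponent $p$, of a carefully chosen class $c$ and on a suitable number of generators (comparable to $d$), and to count isomorphism classes among the quotients $F/W$ with $W$ ranging over subgroups lying in a bounded number of the top layers of the lower central series of $F$, the number of admissible $W$ of each corank being a product of Gaussian binomial coefficients.

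The main obstacle is precisely this last count, and specifically controlling how much $\Aut(F)$ collapses the family: one must bound the image of $\Aut(F)$ in its action on the relevant layers of $F$ and show it is small compared with the number of subgroups $W$, so that $n^{\Omega(d\log n)}$ isomorphism types survive. Obtaining the correct dependence on $d$ here — rather than merely $n^{\Omega(\log n)}$ — is the delicate point, and it is what forces the class $c$ and the generating number to be tuned as functions of $d$ and $n$; a secondary, more routine difficulty is upgrading abstract existence of GRRs for these groups to GRRs of the prescribed valency $d$, together with the finitely many small or odd valencies handled by hand. Assembling the two bounds, and absorbing the discrepancy for small $n$ into the additive constant $c$, yields $n^{ad\log n}-c\le CAY_d(n)\le n^{bd\log n}$.
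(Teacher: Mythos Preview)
Your upper bound is fine and matches the paper's: they simply cite Lubotzky's theorem (your ``chief-series analysis'' is essentially a sketch of his proof) and then multiply by the $n^d$ choices of connection set.

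The lower bound, however, has a genuine gap. You route everything through GRRs, asserting that ``known GRR constructions can be arranged to have any valency somewhat above the minimum number of generators'' and that therefore essentially every $d$-generated $p$-group admits a $d$-valent GRR. Neither claim is available in the literature; indeed, the paper explicitly leaves as open (Conjecture~\ref{Conj3}) the statement $GRR_d(n)\ge n^{ad\log n}-c$ for $d>3$, so you cannot invoke it here. Your parenthetical that odd valencies are ``handled separately'' is not a proof either: for odd $d$ any inverse-closed $d$-set must contain an involution, so your odd-$p$ groups cannot carry a $d$-valent Cayley graph at all, let alone a GRR. Thus the very case that forces the paper's machinery is the one you wave away.

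The paper's route is different and avoids GRRs entirely. They work inside the universal group $W_d=C_2\ast\cdots\ast C_2$ generated by $d$ involutions, so that every finite quotient $G=W_d/N$ comes with a canonical connection set of $d$ involutions and $\Cay(G,S)$ is automatically $d$-valent. They then count normal subgroups $N$ of index $2^m$ lying between suitable terms of the lower exponent-$2$ central series of $W_d$, and---this is the key extra condition---choose $N$ not fixed by any nontrivial element of $\Sym(d)\le\Aut(W_d)$. This forces $\mathbf{N}_{\Aut(\Gamma)}(G)=G$; since $G$ is a $2$-group it is therefore a Sylow $2$-subgroup of $\Aut(\Gamma)$, and the isomorphism type of $G$ is recoverable from $\Gamma$. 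That replaces your appeal to GRRs and gives the injection from groups to graphs for every $d\ge 3$ simultaneously. The ``$\Aut(F)$-collapsing'' issue you flag is handled by this same $\Sym(d)$-avoidance together with the crude bound $|\{N:W_d/N\cong G\}|\le |G|^d$.
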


(Note that, unless otherwise stated, the logarithm is the natural one.) Since Cayley graphs are vertex-transitive, it follows from Theorem~\ref{Main1} that $n^{a d\log n}-c\leq VT_d(n)$. In the case $d=3$, we were able to obtain stronger results.

\begin{theorem}\label{Main2}
There exist positive constants $a$, $b$ and $c$ such that
$$n^{a\log n}-c\leq GRR_3(n)\leq CAY_3(n)\leq VT_3(n)\leq n^{b\log n}.$$
\end{theorem}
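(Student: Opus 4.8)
The two middle inequalities $GRR_3(n)\le CAY_3(n)\le VT_3(n)$ are immediate, since every GRR is a Cayley graph and every Cayley graph is vertex-transitive; so the real content is the outer pair. For the lower bound I would argue exactly as in the proof of the lower bound of Theorem~\ref{Main1} specialised to $d=3$ --- which already produces $n^{3a\log n}-c$ pairwise non-isomorphic cubic Cayley graphs --- the only additional point being to arrange that the graphs obtained are GRRs. Concretely, the underlying construction should exhibit, for each large $m$, a family of $m^{\Omega(\log m)}$ pairwise non-isomorphic finite $2$-groups of order at most $m$, each carrying a distinguished inverse-closed generating set of size $3$ (for instance, suitable quotients of $\mathbb{Z}_2\ast\mathbb{Z}_2\ast\mathbb{Z}_2$, or $2$-generated $2$-groups with a generating triple $\{x,x^{-1},y\}$ where $y^2=1$). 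That there are $2^{\Omega(k^2)}=m^{\Omega(\log m)}$ such groups of order $2^k=m$ follows from a Higman-type count: the relevant relatively free nilpotent groups of the appropriate class have $2^{\Omega(k^2)}$ quotients of that order, while the pertinent automorphism groups are far smaller. Distinct vertex-groups give non-isomorphic GRRs, since the vertex-group of a GRR is recovered as its full automorphism group; and one checks (or invokes the known abundance of cubic GRRs) that all but a negligible proportion of the graphs in the family are GRRs. This yields $n^{a\log n}-c\le GRR_3(n)$ for suitable positive $a,c$.

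For the upper bound, let $\Gamma$ be a connected cubic vertex-transitive graph of order $m\le n$ and set $G=\Aut(\Gamma)$, a transitive permutation group on the $m$ vertices. The crucial input is the bound on vertex-stabilisers of connected cubic vertex-transitive graphs: by Tutte's theorem when $\Gamma$ is arc-transitive, and by the analysis of the vertex-transitive but not arc-transitive case (where $G$ preserves a decomposition of $\E(\Gamma)$ into a perfect matching and a $2$-factor, and connectivity forces the pointwise stabiliser of a vertex together with its neighbourhood to be trivial), there is an absolute constant $C$ with $|G_v|\le C$. Hence $|G|=m|G_v|\le Cn$; moreover $G_v$ is generated by at most $\log_2 C$ elements, and $G$ is generated by $G_v$ together with at most three further elements (one sending $v$ into each $G_v$-orbit on its neighbourhood), so $G$ is generated by at most $\log_2 C+3$ elements.

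Up to isomorphism, $\Gamma$ is then determined by a triple $(G,G_v,\mathcal{O})$, where $G$ is an abstract group, $G_v$ a core-free subgroup of order at most $C$, and $\mathcal{O}$ a self-paired union of at most three $G$-orbitals on the cosets of $G_v$ of total valency $3$. I would bound each piece: (i) the number of groups of order at most $Cn$ generated by at most $\log_2 C+3$ elements is $n^{O(\log n)}$ --- in the dominant nilpotent case this follows by factoring into Sylow subgroups and using that the number of $d$-generated $p$-groups of order $p^a$ is $p^{O_d(a^2)}$, so that the total over all divisors at most $Cn$ is $(Cn)^{O(\log n)}$; (ii) given $G$, the number of subgroups of order at most $C$ is at most $\sum_{k\le C}|G|^{\log_2 k}\le n^{O(1)}$; (iii) given $G$ and $G_v$, an admissible $\mathcal{O}$ is specified by at most three elements of $G$, so there are at most $|G|^{3}\le n^{O(1)}$ choices. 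Multiplying, $VT_3(n)\le n^{O(\log n)}$, i.e.\ $VT_3(n)\le n^{b\log n}$ for a suitable constant $b$.

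The main obstacle is the conjunction of two ingredients. The first is the \emph{constant} bound $|G_v|\le C$ for cubic vertex-transitive graphs: in the arc-transitive case this is Tutte's theorem, but the vertex-transitive-but-not-arc-transitive case requires a genuinely delicate local analysis, and this is where most of the graph-theoretic effort lies. The second is the enumeration estimate that there are at most $N^{O_d(\log N)}$ groups of order at most $N$ generated by $d$ elements; once this is in hand the counting above is routine, but establishing it (in the form strong enough here) needs the Sylow reduction together with the bound on the number of $d$-generated $p$-groups of a given order. On the lower-bound side, the parallel difficulty is to verify simultaneously that the constructed family contains $m^{\Omega(\log m)}$ pairwise non-isomorphic groups and that the GRR condition is met for the chosen generating triples.
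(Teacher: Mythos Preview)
Your upper-bound argument contains a genuine error: the claim that there is an absolute constant $C$ with $|G_v|\le C$ for every connected cubic vertex-transitive graph is false. Tutte's theorem gives $|G_v|\le 48$ in the arc-transitive case, but in the merely vertex-transitive case the pointwise stabiliser of a vertex together with its neighbourhood need not be trivial, and $|G_v|$ is unbounded. Concretely, the split Praeger--Xu type graphs $\SG(\C(r,s))$ of order $r2^s$ (case~(A') in the paper's Theorem~\ref{cubiclost}) have vertex-stabilisers of order roughly $2^s$; more generally, the result of \cite{lost} only guarantees $|\V(\Gamma)|\ge 8|G_v|\log_2|G_v|$ outside a list of exceptional families, so $|G_v|$ can be nearly linear in $|\V(\Gamma)|$. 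The paper therefore does \emph{not} bound $|G_v|$ by a constant. Instead it splits into two regimes: graphs with $|\Aut(\Gamma)|>n^2$ are handled by the classification-based Theorem~\ref{cubiclost} (there are at most $cn^2$ of them), while for $|\Aut(\Gamma)|\le n^2$ one passes to a $3$-generated transitive subgroup $H$ (Lemma~\ref{3gen}) and applies Lubotzky's bound to $H$, then counts subgroups of index at most $n$ and neighbourhoods. Your steps~(ii) and~(iii) are fine in spirit, but~(i) rests on the false stabiliser bound, so the whole upper-bound count collapses.

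On the lower bound your outline is close to the paper's, but the step ``one checks that all but a negligible proportion of the graphs in the family are GRRs'' is where the actual work lies, and you have not indicated how to do it. The paper's mechanism is specific: in Theorem~\ref{thm2} the normal subgroups $N$ are chosen (via Lemma~\ref{apeman}) so that no nontrivial permutation of the three generating involutions fixes $N$, which forces $\nor{\Aut(\Gamma)}{G}=G$; then a theorem of Li~\cite{Li} says that for a cubic Cayley graph on a $2$-group this normaliser condition already implies $\Aut(\Gamma)=G$, i.e.\ $\Gamma$ is a GRR. Without an argument of this kind there is no reason your Cayley graphs should be GRRs rather than merely Cayley graphs.
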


We conjecture that Theorem~\ref{Main2} also holds when $3$ is replaced by a larger integer.

\begin{conj}\label{Conj3}
There exist positive constants $a$, $b$ and $c$ such that, for every $d\geq 3$,
$$n^{ad\log n}-c\leq GRR_d(n)\leq CAY_d(n)\leq VT_d(n)\leq n^{bd\log n}.$$
\end{conj}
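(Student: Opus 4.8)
The plan is to prove the two outer inequalities separately; the inner ones, $GRR_d(n)\le CAY_d(n)\le VT_d(n)$, hold by definition, and Theorem~\ref{Main1} already gives matching bounds for $CAY_d(n)$.

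For the lower bound $n^{ad\log n}-c\le GRR_d(n)$, I would revisit the construction behind the lower bound of Theorem~\ref{Main1}: it produces, for $n$ large, at least $n^{ad\log n}$ pairwise non-isomorphic $d$-valent connected Cayley graphs $\Cay(G,S)$ with $|G|\le n$, necessarily on $d$-generated groups $G$ (of which there are $n^{\Theta(d\log n)}$ of order at most $n$), and the aim is to show all but a negligible fraction of these are GRRs. A first, harmless step discards the groups admitting no GRR at all --- abelian groups of exponent greater than $2$, generalised dicyclic groups, and finitely many exceptions --- which number only $n^{1+o(1)}$ of order at most $n$, so the Cayley graphs supported solely on them form a lower-order term $n^{O(d)}$. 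For a non-exceptional $G$ and a connection set $S$, $\Cay(G,S)$ is a GRR once no nontrivial automorphism of $G$ fixes $S$ setwise and the stabiliser of the identity vertex in $\Aut(\Cay(G,S))$ is trivial; one then needs that a positive proportion (conjecturally, almost all) of the admissible $S$ have both properties, proved by bounding, for each prime-order would-be symmetry, the number of connection sets it can preserve and comparing with the $\binom{m}{d}$-type count of admissible sets. Because the automorphism group of a GRR is regular and hence contains a \emph{unique} regular subgroup, GRRs on non-isomorphic groups are non-isomorphic, so the count over $G$ is not eroded by isomorphisms among the graphs produced. For $d=3$ this is the lower bound of Theorem~\ref{Main2}; the genuinely new input for $d\ge4$ is this quantitative ``almost all bounded-valency Cayley graphs are GRRs'' statement, which is delicate because $|\Aut(G)|$ may be as large as $n^{O(\log n)}$ whereas a single bounded-valency symmetry excludes only an exponentially-in-$d$ small fraction of connection sets, so the construction must be arranged around groups for which this tension is resolved.

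For the upper bound $VT_d(n)\le n^{bd\log n}$ the difficulty is of a different nature: a $d$-valent vertex-transitive graph need not be a Cayley graph, and for fixed $d$ even its vertex-stabiliser can be exponentially large in the order --- for instance $C_m[\overline{K_2}]$ is $4$-valent with a vertex-stabiliser of order $2^{m}$ for large $m$ --- so one cannot bound $|\Aut(\Gamma)_v|$ and reuse the argument for $CAY_d(n)$. I would argue by induction on the order, via normal quotients. Given a connected $d$-valent vertex-transitive $\Gamma$ of order $m\le n$, choose a suitable transitive $G\le\Aut(\Gamma)$ and a minimal normal subgroup $N\trianglelefteq G$: if $N$ is intransitive, its orbits form a $G$-invariant block system, $\Gamma$ is recovered from the vertex-transitive graph $\Gamma/N$ of order at most $m/2$ and valency at most $d$ by a controlled gluing (a regular $N$-cover when $N$ is semiregular), and one recurses on $\Gamma/N$; if every nontrivial $N\trianglelefteq G$ is transitive then $G$ is quasiprimitive on $V(\Gamma)$, and an O'Nan--Scott-type analysis using the classification of finite simple groups should bound the number of connected $d$-valent $G$-vertex-transitive graphs of order at most $n$ by $n^{O(d)}$ in each type --- the leverage being that the local action $G_v^{\Gamma(v)}\le\Sym(d)$ is small. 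The recursion has depth $O(\log n)$; hence if each reversal of a reduction step can be shown to cost only $n^{O(d)}$ choices and the quasiprimitive base cases to number at most $n^{O(d)}$, the total is $\bigl(n^{O(d)}\bigr)^{O(\log n)}=n^{O(d\log n)}$, as required (the Cayley graphs, huge-automorphism-group ones included, being absorbed here through the bound $n^{\Theta(d\log n)}$ on the number of $d$-generated groups).

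I expect the decisive obstacle to be making the reduction step of this upper bound quantitative. The naive number of connected regular $N$-covers of a fixed base graph is $|N|$ to the first Betti number of that base, which is exponential in $n$ and far too large; so one needs genuine structural input forcing the semiregular subgroups that occur as normal subgroups of full automorphism groups, together with the associated voltage assignments, to be highly constrained --- in the spirit of the polycirculant problem and of bounds on vertex-stabilisers of vertex-transitive graphs --- and one must correspondingly rule out infinite families of non-Cayley $d$-valent vertex-transitive graphs proliferating faster than the target rate. For $d=3$, vertex-stabilisers in vertex-transitive groups are bounded and cubic vertex-transitive graphs are structurally well understood, which is precisely what makes Theorem~\ref{Main2} accessible; the failure of such a stabiliser bound for $d\ge4$, illustrated by the examples above, is why Conjecture~\ref{Conj3} is still open.
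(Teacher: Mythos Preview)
The statement you are addressing is a \emph{conjecture} in the paper, not a theorem: the authors do not prove it and explicitly leave it open. Their only remark about its status is that it would follow from Theorem~\ref{Main1} together with Conjectures~\ref{Conj1} and~\ref{Conj2}, both of which are themselves open. There is therefore no ``paper's own proof'' to compare your proposal against.

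Your write-up is candid about this: you say yourself that the GRR lower bound for $d\ge 4$ requires a quantitative ``almost all bounded-valency Cayley graphs are GRRs'' statement that is ``delicate'', and that for the upper bound the ``decisive obstacle'' is making the normal-quotient reduction step quantitative, ending with ``why Conjecture~\ref{Conj3} is still open''. That is an honest research outline, not a proof, and it should be presented as such. Concretely, the two gaps you identify are exactly the missing ingredients: the lower bound needs (a restricted form of) Conjecture~\ref{Conj2}, and the upper bound needs control over vertex-stabilisers in $d$-valent vertex-transitive graphs that is simply unavailable for $d\ge 4$ --- the paper's proof of the $d=3$ upper bound (Theorem~\ref{Main2}) rests entirely on Theorem~\ref{cubiclost}, a structural classification of cubic graphs with large stabilisers drawn from~\cite{lost}, and no analogue of that result is known in higher valency. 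Your normal-quotient/quasiprimitive recursion is a reasonable line of attack, but as you note, bounding the number of regular $N$-covers by $n^{O(d)}$ per step is the crux, and nothing in your proposal indicates how to achieve it; absent such a bound, the argument does not close.
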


The techniques used in this paper can be applied to prove a variety of similar results. We give Theorem~\ref{theo:5arc} simply as an illustration. An \emph{$s$-arc} in a graph $\Gamma$ is a sequence of $s+1$ vertices such that every two consecutive vertices are adjacent and every three consecutive vertices are pairwise distinct. If $\Gamma$ is a graph such that $\Aut(\Gamma)$ acts transitively on the $s$-arcs of $\Gamma$, then $\Gamma$ is called \emph{$s$-arc-transitive}.

\begin{theorem}\label{theo:5arc}
There exists positive constants $a$ and $b$ such that the number of isomorphism classes of $3$-valent $5$-arc-transitive graphs of order at most $n$ is at least $n^{a\log n}-b$.
\end{theorem}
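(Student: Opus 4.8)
The plan is to realise super‑polynomially many cubic $5$‑arc‑transitive graphs as quotients $T/N$ of the cubic tree $T$ by normal subgroups of a single finitely generated group, and then to use Tutte's bound on the arc‑transitivity parameter to tell these quotients apart. By Tutte's theorem together with the classification of the amalgams for arc‑transitive cubic graphs, there is a finitely generated group $U$ (the universal $5$‑arc‑regular amalgam), obtained by amalgamating finite groups $U_v$ of order $48$ and $U_e$ of order $32$ along a common subgroup $U_{ve}$ of order $16$, acting faithfully on $T$, transitively on its $5$‑arcs and with trivial stabiliser of a $5$‑arc, and such that every torsion‑free normal subgroup $N\trianglelefteq U$ of finite index yields a connected cubic graph $T/N$ on which $U/N$ is transitive on $5$‑arcs and for which $|\V(T/N)|=|U:N|/48$: indeed, each torsion element of $U$ lies in a conjugate of $U_v$ or of $U_e$, so such $N$ acts freely on $T$, and $5$‑arcs of $T/N$ lift along the covering $T\to T/N$. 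Since $[U_v:U_{ve}]=3$, the group $U$ is virtually a non‑abelian free group. It therefore suffices to produce, for every large $n$, a set of $n^{\Omega(\log n)}$ such subgroups $N$ of index at most $48n$ with $T/N$ simple, and to show that these quotient graphs fall into $n^{\Omega(\log n)}$ isomorphism classes.

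For the construction I would first fix a finite quotient $Q_0$ of $U$ into which $U_v$ and $U_e$ embed; concretely, let $Q_0:=\Aut(\Sigma)$, where $\Sigma$ is the Tutte--Coxeter graph (cubic, $5$‑arc‑transitive, girth $8$, order $30$), so $|Q_0|=1440$. Its kernel $F:=\ker(U\to Q_0)$ is torsion‑free and normal in $U$, hence acts freely on $T$ with $T/F\cong\Sigma$, and so is free of rank $\beta(\Sigma)=16$. I would also fix a prime $p\nmid|Q_0|$, say $p=7$, and pass to the lower $p$‑central series $F=\lambda_1(F)\ge\lambda_2(F)\ge\cdots$, where $\lambda_{c+1}(F)=[\lambda_c(F),F]\,\lambda_c(F)^{\,p}$. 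Each $\lambda_c(F)$ is characteristic in $F$, hence normal in $U$; each layer $D_c:=\lambda_c(F)/\lambda_{c+1}(F)$ is an elementary abelian $p$‑group on which $F$ acts trivially, so is naturally an $\mathbb{F}_pQ_0$‑module; the group $F/\lambda_{c+1}(F)$ is a finite $p$‑group of order $p^{e_c}$ with $e_c:=\sum_{i\le c}\dim_{\mathbb{F}_p}D_i$; and $\dim_{\mathbb{F}_p}D_c\to\infty$, in fact growing geometrically with ratio tending to $16$. The subgroups $N$ with $\lambda_{c+1}(F)\le N\le\lambda_c(F)$ and $N\trianglelefteq U$ are exactly the preimages in $\lambda_c(F)$ of the $\mathbb{F}_pQ_0$‑submodules $W$ of $D_c$; such an $N$ is torsion‑free of finite index, $|\V(T/N)|=|Q_0|\,p^{\,e_c-\dim W}/48=30\,p^{\,e_c-\dim W}$, and $T/N$ is simple because it covers the girth‑$8$ graph $T/F\cong\Sigma$.

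Given a large $n$, I would then take $c$ maximal with $30\,p^{e_c}\le n$, so that every $\mathbb{F}_pQ_0$‑submodule $W$ of $D_c$ produces a simple cubic $5$‑arc‑transitive graph $T/N$ of order at most $30\,p^{e_c}\le n$. The crux is now purely module‑theoretic: since $p\nmid|Q_0|$ the module $D_c$ is semisimple, and since $Q_0$ is fixed there are only boundedly many isomorphism types of irreducible $\mathbb{F}_pQ_0$‑modules, each of bounded dimension; hence, by the pigeonhole principle, $D_c$ has a homogeneous component isomorphic to $W_0^{\oplus m}$ with $m\ge c_0\dim D_c$ for an absolute constant $c_0>0$, and $W_0^{\oplus m}$ alone has at least $p^{c_1 m^2}$ submodules for an absolute constant $c_1>0$. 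By maximality of $c$ we have $e_{c+1}>\log_p(n/30)$, and since $e_{c+1}$ is at most an absolute constant times $\dim D_c$ this forces $\dim D_c=\Omega(\log n)$. Therefore $D_c$ has at least $p^{\Omega((\log n)^2)}=n^{\Omega(\log n)}$ submodules, and we obtain $n^{\Omega(\log n)}$ simple cubic $5$‑arc‑transitive graphs of order at most $n$ among the $T/N$.

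It remains to bound how many submodules can give the same graph, and here the $5$‑arc‑transitive setting is extremely favourable. Since $N\le F$, the deck group $F/N$ of the regular covering $T/N\to T/F=\Sigma$ acts \emph{freely} on $T/N$; as the kernel of the action of $U$ on $T/N$ is contained in $F$ (it acts trivially on the further quotient $\Sigma$) and maps into $F/N$, it acts freely on $T/N$ and hence equals $N$, so $U/N$ embeds in $\Aut(T/N)$. But by Tutte a cubic graph is at most $5$‑arc‑transitive, so $\Aut(T/N)$ has vertex‑stabiliser of order at most $48$; since $U/N$ is $5$‑arc‑transitive on $T/N$ with vertex‑stabiliser of order $48$, it follows that $\Aut(T/N)=U/N$. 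Thus from the abstract graph $\Gamma=T/N$ one recovers $\Aut(\Gamma)$, and then $N$ as the kernel of the homomorphism $U\to\Aut(\Gamma)$ determined by a vertex of $\Gamma$ together with identifications of its stabiliser with $U_v$ and of the stabiliser of an incident edge with $U_e$; there are only $O(|\V(\Gamma)|)=O(n)$ such choices. Hence each isomorphism class accounts for at most $O(n)$ of our subgroups $N$, and the number of isomorphism classes of cubic $5$‑arc‑transitive graphs of order at most $n$ is at least $n^{\Omega(\log n)}/O(n)\ge n^{a\log n}-b$. I do not expect any single step to be a real obstacle --- which is why the statement is offered as an illustration; the care lies in the (standard) facts that $U$ is virtually free with $\Aut(\Sigma)$ as a quotient and that the amalgam description holds, in keeping the orders $30\,p^{e_c}$ dense enough that the argument applies to \emph{every} large $n$ (handled above by taking $c$ maximal), and in making the ``$O(n)$ choices'' bound precise. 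The underlying mechanism --- a layer of the lower $p$‑central series of $F$, taken over a prime coprime to $|Q_0|$, is a semisimple module with boundedly many isotypes and hence has at least $p^{\Omega(d^2)}$ submodules, where $d$ is its dimension, while here $d=O(\log n)$ --- is exactly the one behind the lower bounds in Theorems~\ref{Main1} and~\ref{Main2}.
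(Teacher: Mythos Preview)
Your argument is correct and structurally identical to the paper's: both set up the $5$-arc-regular amalgam $U=U_v*_{U_{ve}}U_e$, pass to the free kernel $F$ of the map onto $\Aut(\Sigma)$ for $\Sigma$ the Tutte--Coxeter graph, produce many normal subgroups $N\trianglelefteq U$ contained in $F$, invoke Tutte's bound to conclude $\Aut(T/N)=U/N$, and then bound the multiplicity with which a given isomorphism class arises. The one substantive difference is in how the normal subgroups are counted: the paper quotes a result of M\"uller and Schlage-Puchta on normal subgroup growth (for groups whose pro-$p$ completion is free of rank at least $2$) as a black box, whereas you unpack essentially the same mechanism by hand---taking a prime $p\nmid|Q_0|$, descending the lower $p$-central series of $F$, and counting $\mathbb{F}_pQ_0$-submodules of the semisimple layers via Maschke and pigeonhole. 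Your version is thus self-contained where the paper's is not; conversely, the paper's is a one-line citation. A minor further difference: your bound of $O(n)$ on the number of $N$ yielding a fixed isomorphism class (via choices of a flag in $\Gamma$ together with identifications of the stabilisers with $U_v$ and $U_e$) is sharper than the paper's $(48n)^6$ (via the crude observation that $U$ is $6$-generated), though this does not affect the final bound.
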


We also propose two more conjectures. It has been conjectured that almost all vertex-transitive graphs are Cayley graphs (see~\cite{McKayPraeger} for example). We conjecture that this is true for fixed valency.

\begin{conj}\label{Conj1}
Let $d\geq 3$. Then
$$\frac{CAY_d(n)}{VT_d(n)}\to 1, \phantom{yo!} (n\to\infty).$$
\end{conj}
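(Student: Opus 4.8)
\textbf{Towards Conjecture~\ref{Conj1}.} The plan is to bound the number $NC_d(n):=VT_d(n)-CAY_d(n)$ of isomorphism classes of $d$-valent vertex-transitive graphs of order at most $n$ that are not Cayley, and to prove $NC_d(n)=o(CAY_d(n))$; since $VT_d(n)=CAY_d(n)+NC_d(n)$, this is equivalent to Conjecture~\ref{Conj1}. By Theorem~\ref{Main1} we have $CAY_d(n)\geq n^{ad\log n}-c$, so it would suffice to establish a bound of the form $NC_d(n)\leq n^{a'd\log n}$ with $a'$ strictly below the constant realised by the lower bound for $CAY_d(n)$ (more flexibly, once matching bounds for $CAY_d(n)$ are available, any bound $NC_d(n)\leq CAY_d(n)^{1-\varepsilon}$ would do). If $\Gamma$ is such a graph and $G=\Aut(\Gamma)$, then $G$ is a transitive permutation group of some degree $m\leq n$ containing no regular subgroup, and $\Gamma$ is obtained from $G$ by choosing a symmetric union of suborbits of total size $d$, so there are at most $m^{d}\leq n^{d}$ possibilities for $\Gamma$ once $G$ is fixed. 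The task therefore reduces to counting, up to permutation equivalence, the groups $G$ that can arise.

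I would split this count according to the order of the point stabiliser $G_v=\Aut(\Gamma)_v$. When $|G_v|$ is large, $\Gamma$ is highly symmetric and should belong to a very restricted family: for $d\in\{3,4\}$, the work of Poto\v{c}nik, Spiga and Verret either bounds $|G_v|$ by an absolute constant or places $\Gamma$ in an explicitly described class, and for general $d$ one would want an analogous structural description of $d$-valent vertex-transitive graphs with large vertex-stabiliser, together with the observation that such graphs, being close to the most symmetric ones, either almost always admit a regular group of automorphisms or are too few in number to affect the ratio. The contribution of this regime to $NC_d(n)$ should then be $o(CAY_d(n))$; pinning down ``large'' and controlling the transition between the two regimes is work, but not, I expect, the essential difficulty.

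The crux is the complementary regime, where $|G_v|$ is small, so that $|G|$ exceeds $m$ by only a small factor. Here one may first pass to a subgroup $G\leq\Aut(\Gamma)$ that is minimal with respect to being transitive; since $\Gamma$ is non-Cayley, no transitive subgroup of $\Aut(\Gamma)$ is regular, so this $G$ is non-regular, and $\Gamma$ is still obtained from $G$ by a choice among at most $n^{d}$ symmetric unions of suborbits. Thus $NC_d(n)$ is at most $n^{d}$ times the number of minimal transitive, non-regular permutation groups $G$ of degree at most $n$ that are realised as automorphism groups of $d$-valent graphs, and this is where the obstacle is sharp and, I expect, forms the main difficulty of the whole problem. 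There is no evident reason why this number should be below $n^{o(d\log n)}$: even the number of transitive permutation groups of degree at most $n$ whose point stabiliser has bounded order is already $n^{\Omega((\log n)^2)}$ — there are that many groups of order roughly $n$, for instance finite $2$-groups, most of which possess a core-free subgroup of order $2$ — and for fixed $d$ this vastly exceeds the target size $n^{\Theta(d\log n)}$ of $CAY_d(n)$. For Cayley graphs this blow-up is avoided because a $d$-valent Cayley graph $\Cay(G,S)$ presents $G$ as a $d$-generated group, and the number of $d$-generated groups of order at most $n$ is only $n^{O(d\log n)}$; but the automorphism group of a $d$-valent non-Cayley vertex-transitive graph need not be boundedly generated, so this saving is unavailable. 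A proof must therefore use in an essential way that $G$ has a core-free point stabiliser of small order, has no regular subgroup, and is the full automorphism group of a $d$-valent graph, and extract from exactly these extra constraints that the admissible $G$ lie in a class of size $n^{o(d\log n)}$ — for instance by bounding the ``distance from regularity'' of $G$ and showing that transitive non-Cayley configurations with bounded stabiliser are assembled, in few ways, out of genuinely smaller ones; proving that the constraints suffice is essentially tantamount to the conjecture itself.

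A more robust alternative would avoid counting automorphism groups: one would aim to show that for ``almost every'' $d$-valent Cayley graph $\Cay(G,S)$ the full automorphism group is $G\rtimes\Aut(G,S)$ with $\Aut(G,S)$ under control, and then inject the rare non-Cayley vertex-transitive graphs into a bounded number of copies of the $d$-valent Cayley graphs of slightly smaller order, for example by collapsing or deleting a suitable orbit of a minimal $G$-block system. Either way, Conjecture~\ref{Conj1} looks genuinely harder than Theorems~\ref{Main1} and~\ref{Main2}: those only require producing large families of graphs and estimating them crudely, whereas Conjecture~\ref{Conj1} demands a near-exact understanding of how much symmetry a typical $d$-valent vertex-transitive graph possesses — the McKay--Praeger philosophy~\cite{McKayPraeger} specialised to bounded valency, and a setting where, unlike for Cayley graphs on a fixed group, one cannot fall back on the graph being a graphical regular representation.
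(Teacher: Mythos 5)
You have not proved Conjecture~\ref{Conj1}, and the paper does not either: the statement is posed there as an open conjecture, with no argument beyond the general expectation that almost all vertex-transitive graphs are Cayley graphs. Your text is candid about this --- you write that the decisive step ``is essentially tantamount to the conjecture itself'' --- so what you have produced is a problem analysis, not a proof. The framing is sensible: reducing to showing $NC_d(n)=VT_d(n)-CAY_d(n)=o(CAY_d(n))$ is correct, and splitting on the order of the vertex-stabiliser mirrors how the paper handles the upper bound in Theorem~\ref{Main2} for $d=3$, where Theorem~\ref{cubiclost} disposes of the large-stabiliser regime with a bound of $cn^2$, which is indeed negligible against $n^{a\log n}$. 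For $d>3$ even that regime is open, since no analogue of Theorem~\ref{cubiclost} is currently available.

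The genuine gap is the one you name yourself: in the small-stabiliser regime you would need to show that the number, up to permutation equivalence, of transitive groups $G$ of degree at most $n$ with core-free point stabiliser of bounded order, containing no regular subgroup, and occurring as the full automorphism group of a connected $d$-valent graph, is $o(n^{ad\log n})$. Nothing in your outline --- and nothing in the paper --- supplies this. Your own remark makes the obstruction concrete: there are $n^{\Omega((\log n)^2)}$ abstract candidates (for instance $2$-groups acting on the cosets of a non-normal subgroup of order $2$), which swamps $CAY_d(n)$ for fixed $d$, so any successful count must exploit the conjunction of ``no regular subgroup'' and ``is the full automorphism group of a $d$-valent graph'' in an essential way; no mechanism for doing so is given, and the fallback suggestion of injecting non-Cayley graphs into Cayley graphs of smaller order by collapsing a block system is only a slogan. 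Since the entire conclusion rests on this unestablished bound, the proposal does not constitute a proof; it is, at best, a reasonable account of why the statement is hard, consistent with the authors' decision to leave it as Conjecture~\ref{Conj1}.
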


Let $G$ be a group and let $S$ be an inverse-closed and identity-free generating set of $G$. Let $\Cay(G,S)$ be the graph with vertex-set $G$ and two vertices $u$ and $v$ are adjacent if and only if  $uv^{-1}\in S$. It is easy to see that $\Cay(G,S)$ is a Cayley graph and that every Cayley graph can be represented in this way (not necessarily in a unique way). 

It is commonly believed that almost all Cayley graphs are GRRs. Some care must be used in interpreting this statement. Some authors consider, as $G$ runs through the groups of order $n$, the proportion of generating sets $S$ of $G$ with $\Cay(G,S)$ a GRR, and then let $n\to\infty$ (see~\cite{BabaiAndMoron} for example). We prefer to consider the proportion of GRRs  amongst all Cayley graphs of order at most $n$, as $n\to\infty$. (Since a given Cayley graph may be realised by several different pairs $(G,S)$, these two interpretations are not necessarily equivalent.)  More precisely, we conjecture the following.

\begin{conj}\label{Conj2}
Let $d\geq 3$. Then
$$\frac{GRR_d(n)}{CAY_d(n)}\to 1, \phantom{yo!} (n\to\infty).$$
\end{conj}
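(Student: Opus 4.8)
\medskip

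\noindent\textbf{A possible approach to Conjecture~\ref{Conj2}.} We have no proof; we outline a plan and identify the essential difficulty. The attack would run parallel to the proof of Theorem~\ref{Main1}. \emph{Stage one:} isolate the groups that carry almost all $d$-valent Cayley graphs. Using the lower-bound construction behind Theorem~\ref{Main1} together with a matching group-by-group upper estimate, one expects to show that a $(1-o(1))$-fraction of the isomorphism classes counted by $CAY_d(n)$ are realised as $\Cay(G,S)$ with $G$ ranging over a restricted family $\mathcal F$ of groups and $S$ an inverse-closed, identity-free generating set of size $d$. The family $\mathcal F$ should consist of groups that are close to $d$-generated and carry a rigid characteristic series --- for the extremal count the prototype is the family of $d$-generated finite $p$-groups --- the point being that for $G\in\mathcal F$ one can control both $\Aut(G)$ and the regular subgroups of overgroups of $G$. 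This stage also demands a bound on the number of pairs $(G,S)$ representing a fixed graph, so that counting isomorphism classes may be replaced by counting admissible pairs with $G\in\mathcal F$.

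\emph{Stage two} is the core. For $G\in\mathcal F$ one must show that all but a negligible proportion of the admissible $S$ give a GRR. Since $\nor{\Aut(\Cay(G,S))}{R(G)}=R(G)\rtimes\Aut(G,S)$, where $R(G)$ is the right regular representation and $\Aut(G,S)=\{\alpha\in\Aut(G):S^\alpha=S\}$, the graph $\Cay(G,S)$ fails to be a GRR for exactly one of two reasons, both of which must be excluded for almost every $S$. The \emph{inner} failure, $\Aut(G,S)\ne1$, is attacked by a union bound over $1\ne\alpha\in\Aut(G)$: the number of $\alpha$-invariant admissible $S$ is controlled by $|\Fix(\alpha)|$ and the cycle structure of $\alpha$ on $G$, so automorphisms moving a positive proportion of $G$ contribute negligibly against the roughly $|G|^{d/2}$ admissible sets, while the few automorphisms with a very large fixed-point set are handled directly from the structure of $G\in\mathcal F$. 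The \emph{outer} failure is that $R(G)$ is regular but not normal in $A:=\Aut(\Cay(G,S))$: then $A$ properly contains $\nor{A}{R(G)}$, the graph is vertex-transitive ``in a strictly larger way'' (possibly also a Cayley graph of a non-isomorphic group). One would invoke a bound on the order of a vertex-stabiliser in a connected $d$-valent vertex-transitive graph --- of the type already used in the proof of Theorem~\ref{Main2} --- to pin down the possible $A$, and then count the $S$ for which such an enlargement can occur. Summing the resulting estimates over $G\in\mathcal F$ and over orders at most $n$, and feeding in stage one, would yield $GRR_d(n)=(1-o(1))\,CAY_d(n)$.

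The main obstacle is the outer failure in stage two. For \emph{fixed} valency and \emph{growing} order there is at present no general mechanism to bound the number of $S$ for which $\Cay(G,S)$ acquires automorphisms outside $\nor{A}{R(G)}$; this is precisely what leaves Conjecture~\ref{Conj2} open even for $d=3$, where Theorem~\ref{Main2} already confines $GRR_3(n)$ and $CAY_3(n)$ to the common growth rate $n^{\Theta(\log n)}$ without matching their leading behaviour. Overcoming it would presumably require new structural results on vertex-transitive graphs of bounded valency, sharpening the stabiliser estimates exploited here, and/or an analysis of a bounded list of ``small'' exceptions in the spirit of the classical determination of the finite groups admitting no GRR.
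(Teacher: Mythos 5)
The statement you were given is Conjecture~\ref{Conj2}; the paper offers no proof of it and explicitly leaves it open (noting only that it, together with Conjecture~\ref{Conj1} and Theorem~\ref{Main1}, would imply Conjecture~\ref{Conj3}). You have correctly recognised this: your text is a strategy sketch, not a proof, and you say so. There is therefore no proof in the paper to compare yours against, and no grounds to fault you for failing to close the argument.

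As a sketch, what you write is sensible and consistent with the state of the art. The decomposition of the failure of the GRR property into the \emph{inner} obstruction $\Aut(G,S)\neq 1$ (handled by a union bound over automorphisms, in the spirit of Babai--Godsil~\cite{BabaiAndMoron}) and the \emph{outer} obstruction ($R(G)$ regular but non-normal in the full automorphism group) matches the mechanism the paper itself exploits in Theorem~\ref{thm2} and Corollary~\ref{cor1}: there the authors construct many $N$ for which $\Sym(d)$ fixes no $N$, forcing $\nor{\Aut(\Gamma)}{G}=G$, and then for $d=3$ invoke Li's theorem~\cite{Li} to kill the outer obstruction on $2$-groups. You correctly identify the outer failure for general $d$ and general $G$ as the genuine missing ingredient; one further caveat worth adding is that your stage one is itself nontrivial, since the paper's count of $CAY_d(n)$ is only determined up to the constant in the exponent (Theorem~\ref{Main1} gives $n^{ad\log n}\leq CAY_d(n)\leq n^{bd\log n}$ with $a\neq b$), so one cannot yet assert that any specific family $\mathcal{F}$ carries a $(1-o(1))$-fraction of all Cayley graphs. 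Both gaps would need to be filled before this plan becomes a proof.
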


Note that Conjecture~\ref{Conj3} would follow from Theorem~\ref{Main1} together with Conjectures~\ref{Conj1} and~\ref{Conj2}.

We now give a brief outline of the rest of the paper. In Section~\ref{sec:upper}, we obtain the upper bound in Theorem~\ref{Main1} as an easy consequence of some results in the field of normal subgroup growth in finitely generated groups. In Section~\ref{sec:upper2}, we apply one of our recent results on the growth of automorphism groups of $3$-valent vertex-transitive graphs to obtain the upper bound in Theorem~\ref{Main2}.

As a consequence of the work in this paper, we also obtain some bounds on the number of $2$-groups of a given order generated by a given number of involutions. This and related problems are discussed in Section~\ref{sec:2groups}. Section~\ref{sec:main} is the longest. This is where the lower bounds in Theorems~\ref{Main1} and~\ref{Main2} are proved. Finally, Theorem~\ref{theo:5arc} is proved in Section~\ref{sec:5AT}.

\section{The upper bound in Theorem~\ref{Main1}}\label{sec:upper}
In this section, we obtain the upper bound in Theorem~\ref{Main1} as an easy consequence of the following theorem due to Lubotzky.

\begin{theorem}[{{\cite[Theorem~1]{Alex}}}]\label{tm1}  There exists a positive constant $a$ such that the number of isomorphism classes of groups which are $d$-generated and of order at most $n$ is at most $n^{ad\log n}$.
\end{theorem}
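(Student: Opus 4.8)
\medskip

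The plan is to fix an order $q\le n$, bound the number $f_d(q)$ of isomorphism classes of $d$-generated groups of order $q$, and then sum over $q\le n$ at a cost of a factor $n$. The engine is a level-by-level reconstruction of a $d$-generated group $G$ of order $q$ along a chief series $1=N_0<N_1<\cdots<N_k=G$. Write $Q_i=G/N_i$ and $V_i=N_{i+1}/N_i$; then $V_i$ is characteristically simple, say $V_i\cong T_i^{m_i}$ with $T_i$ simple, and from $\prod_i|V_i|=q$ we obtain $k\le\log_2 q$, $m_i\le\log_2 q$, and $\sum_i m_i\le\sum_i m_i\log_2|T_i|=\log_2 q$. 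Moreover $Q_i$ is a quotient of $G$, so it is generated by $e_i:=d(Q_i)\le\min\{d,\log_2 q\}$ elements. To pass from $Q_i$ to $Q_{i+1}$ one records three pieces of data: (i) the isomorphism type of $V_i$, i.e. the triple $(|V_i|,T_i,m_i)$; (ii) the conjugation action, a homomorphism $Q_i\to\mathrm{Out}(V_i)$; and (iii) an extension class in $H^2(Q_i,Z(V_i))$ for the data already fixed. Here $Z(V_i)=V_i$ when $T_i=C_{p_i}$, while $Z(V_i)$ is trivial when $T_i$ is nonabelian simple, in which case (iii) is vacuous and $Q_{i+1}$ is determined by $Q_i$ together with (i) and (ii). Since standard extension theory shows that $Q_{i+1}$ is determined up to isomorphism by $Q_i$ and the choices (i), (ii), (iii), the quantity $f_d(q)$ is at most the number of such sequences of choices.

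Next I count the choices of each type. For (i): there are at most $q^2$ ordered factorisations of $q$ into factors at least $2$, and for each factor at most $2\log_2 q$ choices of $(T_i,m_i)$ (at most $\log_2 q$ possible values of $m_i$, and at most two simple groups of a given order), so all of (i) over the $\le\log_2 q$ levels costs $q^{O(\log\log q)}$. For (ii): a homomorphism out of an $e_i$-generated group is determined by the images of $e_i$ generators, so there are at most $|\mathrm{Out}(V_i)|^{e_i}\le|\Aut(V_i)|^{e_i}$ of them, and $|\Aut(V_i)|<|V_i|^{m_i+2}\,m_i!$ (from $|\GL_m(p)|<p^{m^2}$ when $T_i=C_p$, and $\Aut(T^m)=\Aut(T)\wr\Sym(m)$ with $|\Aut(T)|<|T|^2$ when $T$ is nonabelian simple). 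For (iii): it is standard that $|H^2(Q,M)|\le|M|^{h(d(Q))}$ for every finite group $Q$ and $Q$-module $M$, where $h(x)=O(x^2)$; this comes from the five-term exact sequence $H^1(R,M)^Q\twoheadrightarrow H^2(Q,M)\to H^2(F,M)=0$ attached to a presentation $1\to R\to F\to Q\to 1$ with $F$ free of rank $d(Q)$, once one knows that the relation module $R^{\mathrm{ab}}$ is generated by $O(d(Q)^2)$ elements over $\mathbb{Z}Q$.

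Now multiply over the levels. The type-(iii) factors contribute at most $\prod_i|V_i|^{h(e_i)}\le q^{h(\min\{d,\log_2 q\})}$, and since $\min\{d,\log_2 q\}^2\le d\log_2 q$ this is $q^{O(d\log q)}$. In the type-(ii) bound, the terms $\prod_i|V_i|^{2e_i}$ and $\prod_i(m_i!)^{e_i}$ are at most $q^{2d}$ and $\big(\prod_i m_i^{m_i}\big)^{d}\le(\log_2 q)^{d\log_2 q}=q^{O(d\log\log q)}$, respectively, using $\sum_i m_i\le\log_2 q$. The remaining, dominant, type-(ii) factor is $\prod_i|V_i|^{m_i e_i}=\prod_i|T_i|^{m_i^2 e_i}$, whose logarithm to base $2$ equals $\sum_i m_i^2 e_i\log_2|T_i|\le d\sum_i m_i\log_2|V_i|\le d(\log_2 q)^2$ (using $m_i\le\log_2 q$ and $\sum_i\log_2|V_i|=\log_2 q$), so it is at most $2^{d(\log_2 q)^2}=q^{d\log_2 q}$. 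Putting the pieces together, $f_d(q)\le q^{cd\log q}$ for an absolute constant $c$ and all large $q$, hence $\sum_{q\le n}f_d(q)\le n^{ad\log n}$ for a suitable absolute constant $a$.

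The step I expect to be the crux is the bookkeeping just above: a careless estimate produces an exponent of order $(\log n)^2$ or $d^2\log n$ rather than $d\log n$. What rescues it is that $\prod_i|V_i|=q$ holds exactly (so several of the products telescope to a single power of $q$), that $\sum_i m_i$ and each $m_i$ are bounded by $\log_2 q$, that the number of generators $e_i$ of each quotient is bounded by $\min\{d,\log_2 q\}$ (which kills the potential $d^2$ in the exponent), and that among all the local data it is only the count of module structures on the chief factors that forces a genuine $\log n$ into the exponent. One must also have the two cohomological inputs ready --- the uniqueness of the extension once its (possibly trivial) class is fixed, and the bound $|H^2(Q,M)|\le|M|^{O(d(Q)^2)}$ --- which are standard but must be invoked with the correct dependence on the number of generators of $Q$.
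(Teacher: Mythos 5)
First, a remark on what you are being compared against: the paper does not prove this statement at all --- Theorem~\ref{tm1} is quoted from Lubotzky's paper \cite{Alex} and used as a black box. Your overall architecture (reconstruct a $d$-generated $G$ of order $q$ along a chief series, paying at each level for the isomorphism type of the chief factor $V_i\cong T_i^{m_i}$, for a homomorphism $Q_i\to\mathrm{Out}(V_i)$, and for an extension class) is the right one and is close in spirit to the actual proofs in the literature. Your bookkeeping of the non-cohomological terms --- the telescoping via $\prod_i|V_i|=q$, the bounds $m_i\le\log_2 q$, $\sum_i m_i\log_2|T_i|=\log_2 q$ and $e_i\le\min\{d,\log_2 q\}$, which correctly isolate $\prod_i|T_i|^{m_i^2e_i}\le q^{d\log_2 q}$ as the dominant module-action contribution --- is sound.

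The gap is the cohomological input. The inequality $|H^2(Q,M)|\le|M|^{O(d(Q)^2)}$ is false, and so is the claim that the relation module of a presentation on $d(Q)$ generators is generated by $O(d(Q)^2)$ elements over $\mathbb{Z}Q$: for a finite $p$-group $Q$ one has $\dim_{\mathbb{F}_p}H^2(Q,\mathbb{F}_p)=r(Q)$, the minimal number of relations of $Q$, and this is not bounded by any function of $d(Q)$ alone (already $Q=C_p\wr C_p$ is $2$-generated with $\dim H^2(Q,\mathbb{F}_p)$ growing like $p^2$). In fact the paper itself refutes your bound. By Eq.~(\ref{Andreip}) there are at least $p^{\frac14(d-1)m^2+o(m^2)}$ isomorphism classes of $d$-generated groups of order $p^m$. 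Run your scheme on a $d$-generated $p$-group $G$ of order $q=p^m$: every chief factor is central of order $p$, so there is exactly one ordered factorisation of $q$, one choice of $(T_i,m_i)$, and the homomorphism $Q_i\to\mathrm{Out}(C_p)\cong C_{p-1}$ is trivial because $Q_i$ is a $p$-group; hence \emph{all} of the multiplicity must come from the $H^2$ factors. Your claimed bound would give $f_d(p^m)\le p^{O(d^2m)}$, contradicting the lower bound above for fixed $d\ge 2$ and $m\to\infty$. Equivalently, along a chief series of a typical $p$-group one has $\dim H^2(Q_i,\mathbb{F}_p)=\Theta(\log|Q_i|)$, not $O(d^2)=O(1)$. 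The correct substitute is $|H^2(Q,M)|\le|M|^{O(d\,\lambda(|Q|))}$ --- that is, the relation module is generated by $O(d\,\lambda(|Q|))$ elements over $\mathbb{Z}Q$, with $\lambda(|Q|)\le\log_2|Q|$ the chief length --- and proving this (by a separate induction along a chief series of $Q$ via the Gruenberg resolution) is precisely the hard content of Lubotzky's theorem. With that lemma your final assembly does close, since $\prod_i|V_i|^{O(d\log_2 q)}=q^{O(d\log q)}$; as written, however, you have assumed away the crucial step, and assumed it in a form that is false.
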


(Note that we call a group $G$ \emph{$d$-generated} if it can be generated by $d$ elements, and that $d$ is not necessarily the minimum number of generators of $G$.) Some information about the constant $a$ in Theorem~\ref{tm1} can be found in~\cite[Section~$3$, Remark~$1$]{Alex}.

\begin{proof}[Proof of the upper bound in Theorem~\ref{Main1}]
Let $\Gamma=\Cay(G,S)$ be a $d$-valent Cayley graph of order at most $n$. Since $\Gamma$ is connected, $G$ is $d$-generated. By Theorem~\ref{tm1}, there exists a constant $a$ such that the number of possible isomorphism classes for $G$ is at most $n^{a d\log n}$. For a fixed $G$, there are at most $|G|^{|S|}\leq n^d$ possible choices for $S$ and hence at most $n^{d(1+a \log n)}$ choices for $\Gamma$. This shows that $CAY_d(n)\leq n^{d(1+a \log n)}$. Let $b=a+1/\log 4$ and note that $n\geq 4$ and hence $(1+a\log n)\leq b\log n$, and the result follows.
\end{proof}

\section{The upper bound in Theorem~\ref{Main2}}\label{sec:upper2}

In this section, we prove the upper bound in Theorem~\ref{Main2}. 
Our approach is based on Theorem~\ref{cubiclost} below, the proof of which relies on
a classification of cubic vertex-transitive graphs with ``large'' vertex-stabilisers proved in
 \cite{lost}. As will shall see in the proof, the bound $cn^2$ given in the theorem below is rather crude and could  easily be improved if needed.

\begin{theorem}\label{cubiclost}
There exists a positive constant $c$ such that the number of $3$-valent vertex-transitive graphs $\Gamma$ of order at most $n$ with $|\Aut(\Gamma)|> n^2$ is at most $cn^2$.
\end{theorem}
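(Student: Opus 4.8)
The plan is to leverage the classification from \cite{lost} of connected cubic $G$-vertex-transitive graphs for which the vertex-stabiliser $G_v$ is ``large'', i.e.\ has order exceeding some absolute bound. Recall that for a connected cubic $G$-vertex-transitive graph $\Gamma$, if the action is $2$-arc-transitive then $|G_v|$ can be as large as $48$, but in general $|G_v|$ is bounded only when the local action is controlled; the point of \cite{lost} is precisely that graphs with $|G_v|$ beyond a fixed constant form a very restricted list of infinite families. Concretely, I would first invoke that classification to assert: there is an absolute constant $C_0$ such that every connected cubic vertex-transitive graph $\Gamma$ with $|\Aut(\Gamma)_v| > C_0$ belongs to one of finitely many explicitly described families, each of which is parametrised in such a way that the number of members of order at most $n$ is $O(n)$ (typically the families are indexed by one integer parameter, like prisms, M\"obius--Kantor-type graphs, or graphs built from a cyclic ``base'').

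Next, I would translate the hypothesis $|\Aut(\Gamma)| > n^2$ into a statement about $|\Aut(\Gamma)_v|$. Since $\Gamma$ is vertex-transitive of order $|\V(\Gamma)| = m \le n$, the orbit--stabiliser theorem gives $|\Aut(\Gamma)| = m\,|\Aut(\Gamma)_v|$, so $|\Aut(\Gamma)_v| = |\Aut(\Gamma)|/m > n^2/n = n \ge m$. In particular, for $n$ larger than the constant $C_0$ above, the condition $|\Aut(\Gamma)| > n^2$ forces $|\Aut(\Gamma)_v| > C_0$, and hence $\Gamma$ lies in one of the finitely many families from \cite{lost}. (For the finitely many small values of $n$ with $n \le C_0$, the count is trivially bounded, and absorbing these into the constant $c$ costs nothing.)

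Finally, I would bound the number of such $\Gamma$ of order at most $n$ by summing the contributions of the finitely many families. For each family $\mathcal{F}_i$ appearing in the classification, I would check from its explicit description that the number of graphs in $\mathcal{F}_i$ of order at most $n$ is at most $c_i n$ for some constant $c_i$ --- this is where one uses that each family is essentially a one-parameter family, so that ``order at most $n$'' cuts out at most $O(n)$ values of the parameter (and each parameter value gives boundedly many non-isomorphic graphs). Taking $c = \sum_i c_i$ plus the bounded correction for small $n$ yields the claimed bound $cn^2$; indeed this argument gives the stronger bound $cn$, which is consistent with the remark in the excerpt that $cn^2$ is crude.

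\medskip

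\noindent\textbf{Main obstacle.} The real content --- and the step I expect to be delicate --- is verifying that \emph{every} family in the \cite{lost} classification is sufficiently sparse, i.e.\ contributes only $O(n)$ graphs of order at most $n$. This requires going through the list family by family and confirming both that the natural parametrisation is (at worst) linear in the order and that isomorphisms do not collapse the count in a way that matters (they only help). One must also be careful about the exact threshold: the classification in \cite{lost} is stated for the abstract pair $(\Gamma, G)$ with $G \le \Aut(\Gamma)$, so I should apply it with $G = \Aut(\Gamma)$ and make sure the ``large stabiliser'' hypothesis there matches the regime $|\Aut(\Gamma)_v| > n$ forced by our assumption. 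Once these bookkeeping points are settled, the orbit--stabiliser reduction and the final summation are routine.
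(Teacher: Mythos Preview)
Your approach is essentially the paper's: invoke the classification in \cite{lost}, use orbit--stabiliser to force large vertex-stabiliser, and count the surviving families. One correction worth flagging: the principal infinite family in that classification, the graphs $\SG(\C(r,s))$ of order $r\,2^{s}$, is a \emph{two}-parameter family, and the number of pairs $(r,s)$ with $r\,2^s\le n$ is of order $n\log n$, not $O(n)$; so your aside that the argument actually yields the stronger bound $cn$ is not quite right (the paper obtains $O(n\log n)$), though the stated $cn^2$ bound of course still follows.
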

\begin{proof}
Let $n$ be a positive integer. We would like to obtain an upper bound on the number of $3$-valent vertex-transitive graphs $\Gamma$
such that
$$
|\V(\Gamma)| \le n\>\> \hbox{ and } \>\> |\Aut(\Gamma)|> n^2.
\eqno{(*)}
$$
Let $G=\Aut(\Gamma)$ and let $G_v$ denote the stabiliser in $G$ of the vertex $v$ of $\Gamma$. In view of \cite[Corollary 4]{lost},  $\Gamma$  satisfies one of the following:
\begin{itemize}
\item[{\rm (A)}] $\Gamma$ is one of $19$ exceptional graphs (appearing in Tables 1 and 2 of \cite{lost});
\item[{\rm (A')}] $\Gamma$ is a graph (denoted $\SG(\C(r,s))$ in \cite{lost}), 
                            which is uniquely determined by a pair of integers $(r,s)$, satisfying $r \ge 3$, $1 \le s \le r$,
                            and the order of which is $r 2^s$;
\item[{\rm (B)}] $\Gamma$ is arc-transitive and $|G_v| \le 48$;
\item[{\rm (C)}] $|\V(\Gamma)| \ge 8|G_v| \log_2 |G_v|$.
\end{itemize}

Let us now obtain an upper bound on the number of possibilities for $\Gamma$ in each of these cases.

First, case (A) gives rise to at most $19$ more possibilities for  $\Gamma$. In case (A') (that is, in the case when $\Gamma \cong \SG(\C(r,s))$ for some $r$ and $s$), condition $(*)$ implies that
$r 2^s \le n$.  The number of pairs $(r,s)$ satisfying $r 2^s \le n$, and thus the number of possibilities for $\Gamma$ in case (A'),
 is less than $n \log_2 n$.

In case (B) we see that $48n\geq |\Aut(\Gamma)| \ge n^2$ and thus $n\le 48$. 
It follows that there are only finitely many  possibilities for $\Gamma$ in this case. In fact, a recently obtained census of small $3$-valent vertex-transitive graphs \cite{census} reveals that there are precisely $12$ connected $3$-valent vertex-transitive graphs $\Gamma$ of order at most $48$ satisfying $|\Aut(\Gamma)| > |\V(\Gamma)|^2$, implying that case (B) contributes at most $12$ new possibilities for $\Gamma$.

Finally, in case (C) we have $|G_v| <  |\V(\Gamma)| \le n$, and thus  $|\Aut(\Gamma)| < n |\V(\Gamma)| < n^2$, contradicting condition $(*)$. To summarise, there are less than $12+19 + n\log_2 n$ possibilities for the graph $\Gamma$, and the result follows.
\end{proof}

We also need the following elementary observation.

\begin{lemma}\label{3gen} Let $\Gamma$ be a $d$-valent $G$-vertex-transitive graph. Then there exists $H\leq G$ such that $H$ is $d$-generated and $\Gamma$ is $H$-vertex-transitive.
\end{lemma}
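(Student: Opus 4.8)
The plan is to produce $H$ explicitly from the local structure at a single vertex. Fix a vertex $v\in\V(\Gamma)$ and let $u_1,\dots,u_d$ be its $d$ neighbours. Since $\Gamma$ is $G$-vertex-transitive, for each $i\in\{1,\dots,d\}$ we may choose $g_i\in G$ with $v^{g_i}=u_i$. Set $H=\langle g_1,\dots,g_d\rangle\le G$; by construction $H$ is $d$-generated, so it remains only to check that $\Gamma$ is $H$-vertex-transitive.

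For this, I would show that the orbit $v^H$ is all of $\V(\Gamma)$. First, $v^H$ contains $v$ together with each $u_i=v^{g_i}$, so it contains the closed neighbourhood of $v$. Next, I claim $v^H$ is closed under adjacency: if $w=v^h\in v^H$ with $h\in H$, then the neighbours of $w$ are precisely $u_1^h,\dots,u_d^h$, and $u_i^h=v^{g_i h}$ with $g_i h\in H$, so each neighbour of $w$ lies in $v^H$. Hence $v^H$ is a non-empty union of connected components of $\Gamma$; since $\Gamma$ is connected, $v^H=\V(\Gamma)$, so $H$ acts transitively on $\V(\Gamma)$, as required.

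There is essentially no obstacle here: the only point requiring (minimal) care is the reduction from ``$v^H$ is closed under adjacency'' to ``$v^H=\V(\Gamma)$'', which is exactly where the standing assumption that all graphs are connected is used. I would also remark that the same argument shows more generally that a connected $G$-vertex-transitive graph is $H$-vertex-transitive for any subgroup $H\le G$ containing an element mapping $v$ to each of its neighbours, so the specific choice of the $g_i$ is immaterial.
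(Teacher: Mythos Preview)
Your proof is correct and follows essentially the same approach as the paper's own argument: both pick $g_i\in G$ sending a fixed vertex $v$ to each of its neighbours, set $H=\langle g_1,\dots,g_d\rangle$, and then use connectedness to conclude that the $H$-orbit of $v$ is all of $\V(\Gamma)$. Your write-up is slightly more explicit about why the orbit is closed under adjacency, but the content is the same.
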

\begin{proof}
Let $v$ be a vertex of $\Gamma$ and let $\{v_1,\ldots,v_d\}$ be the neighbourhood of $v$. For every $i\in\{1,\ldots,d\}$, let $g_i$ be an element of $G$ mapping $v$ to $v_i$. Let $H=\langle g_1,\ldots,g_d\rangle$. For every neighbour $v_i$ of $v$, $H$ contains an element mapping $v$ to $v_i$. It follows that, for every $u$ in the $H$-orbit of $v$ and every neighbour $w$ of $u$, $H$ contains an element mapping $u$ to $w$. Since $\Gamma$ is connected, it follows that $H$ is transitive on the vertices of $\Gamma$. 
\end{proof}

\begin{proof}[Proof of the upper bound in Theorem~\ref{Main2}]
Let $\Gamma$ be a $3$-valent vertex-transitive graph of order at most  $n$ and let $A=\Aut(\Gamma)$. First, suppose that $|A|\leq n^2$. By  Lemma~\ref{3gen}, there exists $H\leq A$ such that $H$ is $3$-generated  and $\Gamma$ is $H$-vertex-transitive. As $|H|\leq|A|\leq n^2$, it follows from  Theorem~\ref{tm1} that there exists a positive constant $a$ such that the  number of isomorphism classes for $H$ (as an abstract group) is at most  $(n^2)^{a\log n^2}=n^{4a\log n}$.

Let us now count the number of possible transitive permutation representations of $H$ of degree at most $n$. Each such representation arises from a subgroup $B$ of index at most $n$ in $H$. Since $|H|\leq n^2$, $B$ has order at most $n^2$ and hence is  $\lceil\log_2 n^2\rceil$-generated. As $|H|\leq n^2$, we see that the number of  possible choices for $B$ is at most $(n^2)^{\log n^2}=n^{4\log n}$. This gives  that the number of possible transitive permutation representations of $H$ of degree at most $n$ is at most $n^{4\log n}$.

Finally, given a transitive permutation representation of degree at most $n$ of $H$, we show that the number of possible  $H$-vertex-transitive graphs $\Gamma$  is at most $n^3$. Indeed,  $\Gamma$ is uniquely determined by its edge-set $\E(\Gamma)$ and,  as $H$ is transitive on vertices, $\E(\Gamma)$ is uniquely determined by the  neighbourhood $\{v_1,v_2,v_3\}$ of a given vertex. Clearly, the number of possible choices for  $\{v_1,v_2,v_3\}$ is at most $n^3$. Summing up, it follows that the number of isomorphism classes for $\Gamma$ in this case is at most $n^{4a\log n}\cdot n^{4\log n}\cdot  n^3$.

Now, suppose that $|A|> n^2$. By Theorem~\ref{cubiclost}, there  exists a positive constant $c$ such that the number of isomorphism  classes for $\Gamma$ is at most $cn^2$.  Clearly, there exists a  positive constant $b$ such that $n^{b\log n}\geq cn^2+n^{4a\log n+4\log  n+3}$ for all $n$, and the result follows.
\end{proof}

It would be very interesting to generalise the upper bound in Theorem~\ref{Main2} to valencies higher than $3$. To use our approach, it would first be necessary to obtain a suitable generalisation of Theorem~\ref{cubiclost}.

Observe that in our proof of the upper bound for Theorem~\ref{Main2}, it is very important that the valency $d$ is bounded and does not grow with the number of vertices $n$ of the graph. In comparison, we recall that the automorphism group of the $d$-dimensional cube admits at least $2^{ad^2}$ pairwise non-isomorphic regular subgroups for some positive constant $a$~\cite{pablo}. This would inevitably compromise our approach for generalizing the proof to more general situations.

\section{Enumerating $2$-groups generated by $d$ involutions}\label{sec:2groups}

The problem of estimating the number $f_d(n)$ of isomorphism classes of $d$-generated groups of order $n$ has a rich history. In $1969$, Neumann~\cite{Neumann} considered the function $f_d(n)$ in the case that $n=p^m$ for a fixed prime $p$ and some $m\geq 1$. McIver and Neumann~\cite{IN} showed that $f_d(p^m)\leq p^{\frac{1}{2}(d+1)m^2+O(m)}$. In light of this result, Pyber~\cite{Pyber} conjectured that there exists a function $c$ such that $f_d(n)\leq n^{c(d)\log n}$. Finally, in 2001, Lubotzky~\cite{Alex} proved that $f_d(n)\leq n^{cd\log n}$ for some positive constant $c$ (using some ideas of Mann~\cite{Mann}). No particular effort has been made to determine the best constant $c$, however it follows from~\cite[Section~$3$(1)]{Alex} that $f_d(n)\leq n^{2(d+1)\lambda(n)}$, where $\lambda(n)$ is the number of prime-power divisors of $n$ (that is, if $n=p_1^{e_1}\cdots p_\ell^{e_\ell}$ with $p_1,\ldots,p_\ell$ primes, then $\lambda(n)=e_1+\cdots +e_\ell$). Moreover, Jaikin-Zapirain has tightened both the upper and the lower bounds on $f_d(n)$ in the case when $n=p^m$ for a fixed prime $p$. He has proved~\cite[Theorem~$1.4$]{Andrei} that

\begin{equation}\label{Andreip}
p^{\frac{1}{4}(d-1)m^2+o(m^2)}\leq f_d(n)\leq p^{\frac{1}{2}(d-1)m^2+o(m^2)}.
\end{equation}

In this paper, we will obtain some results on a variant of this problem. Let $g_d(m)$ be the number of isomorphism classes of groups of order $2^m$ which admit a generating set consisting of $d$ involutions.

\begin{theorem}\label{BoundsBounds}
For every $d\geq 3$, we have $2^{\frac{(d-2)^2}{8d}m^2+o(m^2)}\leq g_d(m)\leq 2^{\frac{1}{2}(d-2)m^2+o(m^2)}$.
\end{theorem}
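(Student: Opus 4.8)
The upper bound should come essentially for free. A group of order $2^m$ generated by $d$ involutions is in particular a $d$-generated group of order $2^m$, so $g_d(m)\leq f_d(2^m)$. Applying the upper bound in~\eqref{Andreip} with $p=2$ and $n=2^m$ gives $g_d(m)\leq 2^{\frac{1}{2}(d-1)m^2+o(m^2)}$. This is slightly weaker than the claimed $2^{\frac{1}{2}(d-2)m^2+o(m^2)}$, so a small refinement is needed: one extra involution is not "free'' once the others are chosen, because the constraint of being an involution (rather than an arbitrary generator) removes roughly one degree of freedom per generator in the counting argument underlying~\cite{Andrei}. Concretely, I would revisit the counting in Jaikin-Zapirain's (or McIver--Neumann's) argument, where one counts groups of order $2^m$ by counting possible multiplication data relative to a chosen generating $d$-tuple; imposing that each of the $d$ chosen generators squares to the identity cuts the number of choices by a factor of roughly $2^{m^2}$ (one linear relation in $\sim m$ "coordinates'' per generator, over the $\sim m$ layers of a composition series), yielding the exponent $\tfrac12(d-1)m^2 - \tfrac12 m^2 = \tfrac12(d-2)m^2$ up to $o(m^2)$. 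I expect this bookkeeping to be the easier of the two obstacles.

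The lower bound is the substantive part, and the natural strategy is to exhibit an explicit family of $2$-groups of order $2^m$, generated by $d$ involutions, with at least $2^{\frac{(d-2)^2}{8d}m^2+o(m^2)}$ isomorphism classes. The template is the classical construction giving the lower bound in~\eqref{Andreip}: take a suitable relatively free object in a variety of class-$2$ (or bounded-class) $2$-groups on the appropriate number of generators, and then quotient by subgroups of the (elementary abelian) derived-type section; counting the subspaces of a space of dimension $\sim \alpha m$ inside the relevant Frattini-type quotient, up to the action of the automorphism group (which contributes only $2^{o(m^2)}$, since $\mathrm{GL}$ of such a space has order $2^{O(m^2)}$ but after a careful orbit count one still retains $2^{\Theta(m^2)}$ classes), produces $2^{\Theta(m^2)}$ non-isomorphic groups. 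The new twist here is that the generators must be involutions. So I would work in the free object of exponent $2$-ish behaviour: start from the group $F$ which is the relatively free group of nilpotency class $2$ and exponent $4$ (or the appropriate analogue) on $d$ involutions $x_1,\dots,x_d$; its commutator subgroup $F'$ is elementary abelian of rank $\binom{d}{2}$, and more generally a truncated version has a large elementary abelian section $W$ on which one quotients. Choosing the truncation so that the whole group has order $2^m$ forces $\dim W \approx \beta m$ for a constant $\beta$ depending on $d$, and counting subspaces of $W$ of dimension $\tfrac12\dim W$ gives roughly $2^{\frac14(\dim W)^2}$ groups; optimising $\beta$ against the constraint that the generators remain involutions of order exactly $2$ in the quotient yields the stated constant $\tfrac{(d-2)^2}{8d}$.

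The main obstacle, and the step I would spend the most care on, is getting the constant right in the lower bound: one must choose the correct relatively free $2$-group on $d$ involutions, identify precisely which elementary abelian section $W$ can be freely quotiented while keeping each $x_i$ an involution (quotienting by a subspace that "absorbs'' $x_i^2$ would be fatal if $x_i^2\neq 1$, so the construction must genuinely live in a variety where the $x_i$ are already involutions), and then balance (i) the dimension of $W$ available for quotienting against (ii) the total order $2^m$ of the ambient group, since using $d$ involutions rather than $d$ free generators changes the relationship between $m$ and $\dim W$ — this is exactly where the somewhat unusual exponent $\tfrac{(d-2)^2}{8d}$ comes from rather than a cleaner expression. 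I would also need the standard argument that distinct subspaces (modulo the automorphism action, which is subexponential in the relevant sense) give non-isomorphic groups; this is routine but must be stated. Finally, I would verify both bounds are consistent — $\tfrac{(d-2)^2}{8d} \le \tfrac12(d-2)$ holds for all $d\ge 3$ since it rearranges to $d-2\le 4d$ — which serves as a sanity check on the computation.
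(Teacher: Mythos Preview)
Your plan has genuine gaps in both halves.

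\textbf{Upper bound.} Your proposal to ``revisit the counting'' inside Jaikin-Zapirain's argument and shave off $\tfrac12 m^2$ from the exponent by imposing $d$ involution relations is heuristic at best; it is not clear those relations interact with the layer-by-layer count in the way you describe. The paper's argument is much simpler and you have missed it: if $G=\langle x_1,\dots,x_d\rangle$ with each $x_i^2=1$, set $N=\langle x_1x_d,\dots,x_{d-1}x_d\rangle$. Then $|G:N|\le 2$, $N$ is $(d-1)$-generated, and $x_d$ acts on the chosen generators by inversion, so the pair $(N,\{x_ix_d\})$ determines $G$ up to isomorphism. Now apply~\eqref{Andreip} directly to $N$ with $d-1$ generators to get the exponent $\tfrac12(d-2)m^2$; the extra factor $|N|^{d-1}$ for choosing the distinguished generators is $2^{O(m)}$.

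\textbf{Lower bound.} A class-$2$ construction cannot work here. With $d$ fixed, the relatively free class-$2$ group on $d$ involutions has \emph{bounded} order (its derived subgroup has dimension at most $\binom{d}{2}$), so there is no way to ``choose a truncation of order $2^m$'' for large $m$, and no section $W$ of dimension $\approx\beta m$ exists. The Higman--Sims style class-$2$ count gives many groups only when the number of generators grows; that is precisely what is forbidden here. The paper instead works in the universal group $W_d=\langle x_1,\dots,x_d\mid x_i^2\rangle$ (the free product of $d$ copies of $C_2$), whose index-$2$ subgroup $F_{d-1}$ is free of rank $d-1$, and descends along the lower exponent-$2$ central series of $F_{d-1}$ (which coincides with the lower central series of $W_d$ from the second term on). The key technical input --- which your plan does not contain --- is that for a normal subgroup $H_k\lhd W_d$ of index $2^k$ chosen along this series one has $\rr(H_k)=\dim\bigl(H_k/[H_k,W_d]H_k^2\bigr)\ge \tfrac{d-2}{2}k+o(k)$; this comes from the known rank growth in the exponent-$2$ series of a free group, halved because of the action of $x_d$. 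One then takes $k\approx\tfrac{d+2}{2d}m$, sets $s=m-k\approx\tfrac{d-2}{2d}m$, and counts index-$2^s$ subspaces of the elementary abelian quotient: the exponent $s(\rr(H_k)-s)$ evaluates to $\tfrac{(d-2)^2}{8d}m^2+o(m^2)$. Passing from normal subgroups to isomorphism classes costs only a factor $2^{md}$, since $W_d$ is $d$-generated. Your proposal does not supply this rank estimate, the correct depth in the series, or the optimisation that produces the constant.
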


We first prove the upper bound, which easily follows from~\cite{Andrei}.

\begin{proof}[Proof of the upper bound in Theorem~\ref{BoundsBounds}]
Let $d\geq 3$ and let $G$ be a group of order $2^m$ which is generated by $d$ involutions $x_1,\ldots,x_d$. Let $N=\langle x_1x_d,\ldots,x_{d-1}x_d\rangle$. Note that $x_d$ acts by conjugation as inversion on the elements of $\{x_1x_d,\ldots,x_{d-1}x_d\}$, and that $N$ has order at most $2^m$ and is $(d-1)$-generated. Moreover, the isomorphism class of $G$ is uniquely determined by $N$ together with a distinguished set $S$ of $d-1$ generators of $N$. It follows from Eq.~(\ref{Andreip}) that there are at most $2^{\frac{1}{2}(d-2)m^2+o(m^2)}$ possible choices for $N$.  There are at most $|N|^{|S|}\leq (2^m)^{d-1}$ possible choices for $S$, therefore
$$g_d(m)\leq 2^{\frac{1}{2}(d-2)m^2+o(m^2)}2^{m(d-1)}\leq 2^{\frac{1}{2}(d-2)m^2+o(m^2)}.$$
\end{proof}

We did not succeed in adapting the proof of the lower bound in~\cite{Andrei} to a proof of the lower bound in Theorem~\ref{BoundsBounds}: at a critical juncture, Jaikin-Zapirain considers $p$-groups generated by elements of ``large'' order while in our context, the groups need to be generated by involutions. The lower bound in Theorem~\ref{BoundsBounds} will be obtained by slightly different methods in the next section (see Theorem~\ref{thm2}).

\section{The proofs of the lower bounds}\label{sec:main}

The goal of this section is prove the lower bounds in Theorems~\ref{Main1},~\ref{Main2} and~\ref{BoundsBounds}. The proofs of the lower bound on $f_d(p^m)$ in~\cite{Ne} and in~\cite{Andrei} consist in a careful analysis and counting of the normal subgroups of index $p^m$ in the free group on $d$ generators. Our method is similar and based on the same techniques but applied to the universal group generated by $d$ involutions. We first set some notation. Throughout this section,  let $d\geq 3$ and let

\begin{align*}
&&W_d&=\langle x_1,\ldots,x_d\mid x_1^2,\ldots ,x_d^2\rangle,&\\
&&y_1&=x_1x_d,\,\, y_2=x_2x_d,\,\, \ldots,\,\, y_{d-1}=x_{d-1}x_d,&\\
&&F_{d-1}&=\langle y_1,\ldots,y_{d-1}\rangle.&
\end{align*}
 Clearly, $F_{d-1}$ is a free group of rank $d-1$. Moreover, $|W_d:F_{d-1}|=2$ and $W_d=F_{d-1}\rtimes \langle x_d\rangle$ with $y_i^{x_d}=y_i^{-1}$ for every $i\in \{1,\ldots,d-1\}$.

Let $\Sym(d)$ be the symmetric group on $\{1,\ldots,d\}$. For a generator $x_i$ of $W_d$ and a permutation $\sigma\in \Sym(d)$, let $x_i^{\sigma}=x_{i^\sigma}$. This induces a faithful action of $\Sym(d)$ on $W_d$ as a group of automorphism and, in the rest of this section, we consider $\Sym(d)$ as a subgroup of $\Aut(W_d)$.

Given a group $G$ and two subgroups $H,K\leq G$, we write $G^2=\langle g^2\mid g\in G\rangle$ and $[H,K]=\langle[h,k]\mid h\in H, k\in K\rangle$, where $[h,k]=h^{-1}k^{-1}hk$. If $G$ is an elementary abelian $p$-group, then its dimension when viewed as a vector space over the prime field $\mathbb{F}_p$ will be denoted by $\rk(G)$. For a subgroup $H$ of $W_d$, $H/([H,W_d]H^2)$ is an elementary abelian $2$-group. We will denote by $\rr(H)$ the dimension of $H/([H,W_d]H^2)$, in other words,
$$\rr(H)=\rk(H/([H,W_d]H^2)).$$

\subsection{The lower central series of $W_d$}\label{low}
For a group $G$ and an integer $i\geq 1$, we denote by $\gamma_i(G)$ the $i$th term of the lower central series of $G$.

\begin{lemma}\label{le2}
Let $G$ be a group generated by a finite set of involutions. Then $\gamma_i(G)/\gamma_{i+1}(G)$ is a finite elementary abelian $2$-group.
\end{lemma}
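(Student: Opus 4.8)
The plan is to reduce to the case $G = W_d$ (or any group generated by finitely many involutions) and show inductively that each quotient $\gamma_i(G)/\gamma_{i+1}(G)$ is both finitely generated and of exponent dividing $2$; finiteness then follows since a finitely generated abelian group of finite exponent is finite. For the generation claim, I would use the standard fact that if $G = \langle g_1,\ldots,g_k\rangle$ then $\gamma_i(G)/\gamma_{i+1}(G)$ is generated by the images of the left-normed commutators $[g_{j_1},g_{j_2},\ldots,g_{j_i}]$ in the $g_j$'s; since there are finitely many such commutators of each fixed length, each quotient is finitely generated abelian. This part is routine and uses only classical commutator calculus (e.g.\ the three-subgroup lemma and the fact that $\gamma_i(G)/\gamma_{i+1}(G)$ is central in $G/\gamma_{i+1}(G)$).

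The substantive point is the exponent bound: I want to show $\gamma_i(G)/\gamma_{i+1}(G)$ has exponent dividing $2$. I would argue by induction on $i$. For $i=1$, $\gamma_1(G)/\gamma_2(G) = G/[G,G]$ is generated by the images of the involutions $g_1,\ldots,g_k$, hence is an elementary abelian $2$-group. For the inductive step, recall that $\gamma_{i+1}(G)/\gamma_{i+2}(G)$ is generated by the images of commutators $[x,g_j]$ with $x$ ranging over $\gamma_i(G)$ and $g_j$ over the generating involutions; working modulo $\gamma_{i+2}(G)$, the commutator map $(x,g) \mapsto [x,g]$ is bi-additive in each variable (this is where centrality of $\gamma_{i+1}/\gamma_{i+2}$ in $G/\gamma_{i+2}$ is used), so it suffices to bound the order of a single generator $[x,g_j] \bmod \gamma_{i+2}(G)$. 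Since $g_j^2 = 1$, bilinearity gives $[x,g_j]^2 \equiv [x,g_j^2] = 1 \pmod{\gamma_{i+2}(G)}$, and since by induction $x^2 \in \gamma_{i+1}(G)$, we also get $[x^2,g_j] \equiv [x,g_j]^2 \equiv 1$, consistently. Hence every generator of $\gamma_{i+1}(G)/\gamma_{i+2}(G)$ has order dividing $2$, so the quotient is elementary abelian.

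The main obstacle — and the place to be careful — is the bilinearity/additivity of the commutator map modulo the next term of the series, together with keeping track that ``$g^2 = 1$'' really does force ``$[x,g]^2 = 1$'' in the quotient rather than just in $G/\gamma_2(G)$. The cleanest way to handle this is to first prove the auxiliary fact that for $x \in \gamma_i(G)$ and $y,z \in G$ one has $[x,yz] \equiv [x,y][x,z]$ and $[xw,y] \equiv [x,y][w,y] \pmod{\gamma_{i+2}(G)}$ (both consequences of the Hall--Witt identity and the fact that $[\gamma_i(G),\gamma_2(G)] \le \gamma_{i+2}(G)$), and then specialise. Once these identities are in hand, the exponent bound is immediate, and combining with finite generation completes the proof that $\gamma_i(G)/\gamma_{i+1}(G)$ is a finite elementary abelian $2$-group.
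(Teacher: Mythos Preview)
Your proof is correct and follows essentially the same line as the paper's: both argue inductively that $\gamma_i(G)/\gamma_{i+1}(G)$ is generated by finitely many commutators $[u,s_b]$ with $s_b$ a generating involution, and both use centrality of this quotient in $G/\gamma_{i+1}(G)$ together with $s_b^{\,2}=1$ to conclude $[u,s_b]^2\equiv 1$. The paper carries out this last step by a direct three-line manipulation rather than invoking bilinearity; note also that you can drop both the Hall--Witt identity and the inductive hypothesis on $x^2$, since the basic identity $[x,yz]=[x,z][x,y]^z$ already yields $[x,s_b]^2\equiv[x,s_b^{\,2}]=1$ modulo the next term.
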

\begin{proof}
It suffices to show that $\gamma_i(G)/\gamma_{i+1}(G)$ is generated by a finite set of involutions. Let $\{s_1,\ldots,s_d\}$ be a generating set of involutions for $G$. We argue by induction on $i$. Clearly, $\gamma_1(G)/\gamma_2(G)=G/[G,G]$ is an abelian group generated by $d$ involutions and hence the result holds for $i=1$. Assume that $i>1$. By the induction hypothesis, we have that $\gamma_{i-1}(G)$ is generated (modulo $\gamma_i(G)$) by $\{u_1,\ldots,u_k\}\subseteq\gamma_{i-1}(G)$ for some positive integer $k$. By the definition of the lower central series, $\gamma_i(G)/\gamma_{i+1}(G)$ is a central subgroup of $G/\gamma_{i+1}(G)$. Since $\gamma_i(G)=[\gamma_{i-1}(G),G]$, we see that  $\gamma_i(G)$ is generated (modulo $\gamma_{i+1}(G)$) by $[u_a,s_b]$, for $a\in \{1,\ldots,k\}$ and $b\in \{1,\ldots,d\}$. In particular, $\gamma_i(G)/\gamma_{i+1}(G)$ is finitely generated. Moreover, since $[u_a,s_b]\gamma_{i+1}(G)$ is a central element of $G/\gamma_{i+1}(G)$ and $s_b^2=1$, we have
\begin{eqnarray*}
[u_a,s_b]^2\gamma_{i+1}(G)&=&(u_a^{-1}s_bu_as_b)[u_a,s_b]\gamma_{i+1}(G)=u_a^{-1}s_b u_a[u_a,s_b]s_b\gamma_{i+1}(G)\\
&=&u_a^{-1}s_bu_a(u_a^{-1}s_bu_as_b)s_b\gamma_{i+1}(G)
=\gamma_{i+1}(G).
\end{eqnarray*}
Thus, $\gamma_{i}(G)/\gamma_{i+1}(G)$ is generated by a finite set of involutions. This completes the induction and the proof.
\end{proof}

Let $\{P_i\}_i$ denote the lower exponent-$2$ central series of $F_{d-1}$. In other words,

$$P_1=F_{d-1}\,\,\,\, \textrm{and}\,\,\,\, P_i=[P_{i-1},F_{d-1}]P_{i-1}^2, \,\,\, \, \textrm{for}\,\,\,i\geq 2.$$

It is clear from this definition that  $P_i$ is $\Sym(d)$-invariant.

\begin{lemma}\label{lee1}For every $i\geq 2$, we have $P_i=\gamma_i(W_d)$.
\end{lemma}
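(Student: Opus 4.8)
The plan is to prove $P_i=\gamma_i(W_d)$ for all $i\geq 2$ by induction on $i$, using the two structural facts established so far: that $W_d=F_{d-1}\rtimes\langle x_d\rangle$ with $x_d$ inverting every $y_j$, and that (by Lemma~\ref{le2}) each quotient $\gamma_i(W_d)/\gamma_{i+1}(W_d)$ is an elementary abelian $2$-group. The role of the $2$-torsion is what will force the lower central series of $W_d$ to coincide with the lower \emph{exponent-$2$} central series of $F_{d-1}$ from the second term onwards.

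For the base case $i=2$: since $W_d/F_{d-1}$ is abelian and $F_{d-1}=P_1$, I first observe $\gamma_2(W_d)=[W_d,W_d]\leq F_{d-1}$. On the other hand, for each $j$ we have $[y_j,x_d]=y_j^{-1}y_j^{x_d}=y_j^{-2}$, so $y_j^2\in\gamma_2(W_d)$; and $[y_j,y_k]\in\gamma_2(W_d)$ trivially. Hence $\gamma_2(W_d)$ contains the subgroup generated by all $y_j^2$ together with $[F_{d-1},F_{d-1}]$. By Lemma~\ref{le2}, $F_{d-1}/\gamma_2(W_d)$ is elementary abelian, which says exactly that $\gamma_2(W_d)\supseteq [F_{d-1},F_{d-1}]F_{d-1}^2=P_2$; conversely $\gamma_2(W_d)=[W_d,W_d]$ is generated modulo $[F_{d-1},F_{d-1}]$ by the $[y_j,x_d]=y_j^{-2}$, so $\gamma_2(W_d)\leq [F_{d-1},F_{d-1}]F_{d-1}^2=P_2$. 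This gives $P_2=\gamma_2(W_d)$.

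For the inductive step, assume $P_i=\gamma_i(W_d)$ with $i\geq 2$. Then
$$\gamma_{i+1}(W_d)=[\gamma_i(W_d),W_d]=[P_i,W_d]=[P_i,F_{d-1}]\,[P_i,x_d],$$
using that $W_d=\langle F_{d-1},x_d\rangle$ and that $[P_i,W_d]$ is generated by commutators with a generating set (here one should be slightly careful: $[P_i,W_d]$ is normal, and is generated as a normal subgroup by $[P_i,F_{d-1}]$ and $[P_i,x_d]$, but since $P_i$ is already normal and $\Sym(d)$-invariant this presents no real difficulty). Now $[P_i,F_{d-1}]\leq P_{i+1}$ by definition of the $P_j$. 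For the $x_d$ part: for $u\in P_i$, I want $[u,x_d]\in P_{i+1}$. Since $P_i\leq F_{d-1}$ and $x_d$ inverts $F_{d-1}$ by conjugation, $u^{x_d}=u^{-1}$ wait — this is only literally true on $F_{d-1}/[F_{d-1},F_{d-1}]$; more precisely, conjugation by $x_d$ is the automorphism of $F_{d-1}$ sending each $y_j\mapsto y_j^{-1}$, which does \emph{not} send a general word $u$ to $u^{-1}$. The correct statement is that this automorphism acts as $-1$ on each quotient $P_i/P_{i+1}$ (one shows by induction that $P_i/P_{i+1}$ is spanned by left-normed commutators in the $y_j$, on which an inverting automorphism acts by $(-1)^{\text{weight}}$, but crucially everything is $2$-torsion so $(-1)^{\text{weight}}=1$). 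Hence modulo $P_{i+1}$ we have $u^{x_d}\equiv u$, i.e. $[u,x_d]=u^{-1}u^{x_d}\in P_{i+1}$. Therefore $\gamma_{i+1}(W_d)\leq P_{i+1}$. For the reverse inclusion $P_{i+1}\leq\gamma_{i+1}(W_d)$: we have $[P_i,F_{d-1}]=[\gamma_i(W_d),F_{d-1}]\leq\gamma_{i+1}(W_d)$, and $P_i^2\leq\gamma_{i+1}(W_d)$ because $\gamma_i(W_d)/\gamma_{i+1}(W_d)$ has exponent $2$ by Lemma~\ref{le2} and $P_i=\gamma_i(W_d)$; since $P_{i+1}=[P_i,F_{d-1}]P_i^2$, this gives $P_{i+1}\leq\gamma_{i+1}(W_d)$, completing the induction.

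The main obstacle is the point flagged above: making rigorous the claim that conjugation by $x_d$ acts trivially on $P_i/P_{i+1}$ for $i\geq 2$ (equivalently that $[u,x_d]\in P_{i+1}$ for $u\in P_i$), since one cannot naively write $u^{x_d}=u^{-1}$. The cleanest route is a secondary induction: $P_2/P_3$ is generated by the images of $y_j^2$ and $[y_j,y_k]$, on each of which $x_d$-conjugation acts as squaring-then... — more simply, $x_d$ inverts each $y_j$, hence fixes each $y_j^2$ modulo $P_3$ and sends $[y_j,y_k]\mapsto[y_j^{-1},y_k^{-1}]\equiv[y_j,y_k]\pmod{P_3}$ (the correction terms lie in $P_3$ because they involve higher commutators or squares), and then propagate using $P_{i+1}=[P_i,F_{d-1}]P_i^2$ and the fact that if $x_d$ fixes $a\bmod P_i$ and $y_j\mapsto y_j^{-1}$ then $[a,y_j]\mapsto[a,y_j^{-1}]\equiv[a,y_j]^{-1}\equiv[a,y_j]\pmod{P_{i+1}}$, the last congruence again using that $P_i/P_{i+1}$ is elementary abelian. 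Once this $x_d$-action lemma is in hand, the rest of the argument is the routine bookkeeping sketched above.
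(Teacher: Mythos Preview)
Your argument is correct; the base case and the easy direction $P_{i+1}\le\gamma_{i+1}(W_d)$ match the paper's. The substantive difference is in how you establish the hard direction $[P_i,x_d]\le P_{i+1}$. You isolate this as a standalone claim (``$x_d$ acts trivially on each $P_i/P_{i+1}$'') and prove it by a separate induction purely inside the $P$-series, using that $P_{i+1}=[P_i,F_{d-1}]P_i^2$ and that the quotients have exponent $2$. The paper instead stays inside the main induction: to show $[P_{i-1},x_d]\le P_i$ it writes $P_{i-1}=[P_{i-2},F_{d-1}]P_{i-2}^2$, uses the main hypothesis at levels $i-2$ and $i-1$ to get $[g,x_d]\in P_{i-1}$ for $g\in P_{i-2}$, and then handles $[g^2,x_d]$ and $[g,f,x_d]$ by direct commutator identities together with the Hall--Witt identity. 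Your route is more modular (the ``trivial $x_d$-action'' lemma makes no reference to $\gamma_i(W_d)$ at all and could in fact be started at $i=1$, since $P_1/P_2$ already has exponent $2$), while the paper's route is more economical in that it avoids the secondary induction by feeding the main hypothesis back in via Hall--Witt.

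Two small points worth tightening in your write-up. First, ``$[a,y_j]\mapsto[a,y_j^{-1}]$'' tacitly assumes $a^{x_d}=a$ on the nose; what you actually have is $a^{x_d}=ap$ with $p\in P_i$, and one needs the extra observation that $[ap,y_j^{-1}]\equiv[a,y_j^{-1}]\pmod{P_{i+1}}$ because the correction terms lie in $[P_i,F_{d-1}]\le P_{i+1}$. Second, for the equality $\gamma_{i+1}(W_d)=[P_i,F_{d-1}][P_i,x_d]$ you only need the inclusion $[P_i,W_d]\le P_{i+1}$, and that follows cleanly once you note $P_{i+1}$ is characteristic in $F_{d-1}$ (hence $x_d$-invariant) and use $[n,fx_d^\varepsilon]=[n,x_d^\varepsilon][n,f]^{x_d^\varepsilon}$.
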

\begin{proof}
We argue by induction on $i$. By Lemma~\ref{le2}, $W_d/\gamma_2(W_d)$ is elementary abelian and hence $W_d^2\leq\gamma_2(W_d)$. Clearly, $F_{d-1}/F_{d-1}^2$ is elementary abelian. As $x_d$ has order $2$ and acts by inversion on the elements of $\{y_1,\ldots,y_{d-1}\}$, it follows that $W_d/F_{d-1}^2$ is abelian and hence $\gamma_2(W_d)\leq F_{d-1}^2$. By definition, $F_{d-1}^2\leq P_2\leq W_d^2$ and hence $W_d^2=\gamma_2(W_d)=F_{d-1}^2=P_2$ and the result holds for $i=2$.

Assume that $i\geq 3$. By the induction hypothesis, we have $P_i=[P_{i-1},F_{d-1}]P_{i-1}^2=[\gamma_{i-1}(W_d),F_{d-1}]\gamma_{i-1}(W_d)^2$. By Lemma~\ref{le2}, $\gamma_{i-1}(W_d)/\gamma_i(W_d)$ has exponent $2$ and hence $[\gamma_{i-1}(W_d),F_{d-1}]\gamma_{i-1}(W_d)^2\leq \gamma_i(W_d)$.

It remains to prove that $\gamma_i(W_d)\leq P_i$ or, by the induction hypothesis, that $[P_{i-1},W_d]\leq P_i$. By definition, $[P_{i-1},F_{d-1}]\leq P_i$ and, since $W_d=\langle F_{d-1},x_d\rangle$, it thus suffices to show that $[P_{i-1},x_d]\leq P_i$. As $P_{i-1}=[P_{i-2},F_{d-1}]P_{i-2}^2$, it suffices to show that $[P_{i-2}^2,x_d]$ and $[P_{i-2},F_{d-1},x_d]$ are both subgroups of $P_i$.

Let $g\in P_{i-2}$ and let $f\in F_{d-1}$. We will show that $[g^2,x_d]$ and $[g,f,x_d]$ are in $P_i$. Note that $P_{i-2}\leq\gamma_{i-2}(W_d)$ (this is trivial for $i=3$ and follows from the induction hypothesis for $i\geq 4$).  It follows that
$$[g,x_d]\in [P_{i-2},W_d]\leq[\gamma_{i-2}(W_d),W_d]=\gamma_{i-1}(W_d)=P_{i-1}.$$ Since $P_{i-1}^2$ and $[P_{i-1},F_{d-1}]$ are subgroups of $P_i$, it follows that $[g,x_d]^2$, $[x_d,g,f]$ and $[g,x_d,g]$ are in $P_i$. In particular, $[g^2,x_d]=[g,x_d]^g[g,x_d]=[g,x_d][g,x_d,g][g,x_d]\in P_i$. Finally, note that $[f,x_d]\in \gamma_2(W_d)=P_2$ and hence $[f,x_d,g]\in [P_2,P_{i-2}]\leq P_i$. Using the Hall-Witt identity, it follows that $[g,f,x_d]\in P_i$.
\end{proof}

We have thus shown that, starting at $i=2$, $\{P_i\}_i$ is both the lower central series of $W_d$ and the lower exponent-$2$ central series of $F_{d-1}$.

\subsection{A refined series}\label{seclow}
In this section, we describe an important refinement of the series $\{P_i\}_i$ and prove some results about this refined series. Detailed information  can be found in~\cite[Section~$20$]{Ne} whereas here, we simply give as much information as needed for our purposes.

For each positive integer $i\geq 1$ and for each $j\in \{0,\ldots,i\}$, let $M_{i,j}$ be the group generated by $P_{i+1}$ and by all elements of the form
\begin{equation}\label{gen}
[a_1,\ldots,a_{i-s}]^{2^{s}}
\end{equation}
where $a_1,\ldots,a_{i-s}\in \{y_1,\ldots,y_{d-1}\}$ and where $j\leq s\leq i-1$. It follows from~\cite[Lemma~$20.6$]{Ne} that
$$P_i=M_{i,0}\geq M_{i,1}\geq \cdots \geq M_{i,i-1}\geq M_{i,i}=P_{i+1}.$$
In particular, the series $(M_{i,j})_{i,j}$ is a refinement of the series $(P_i)_i$.

We now follow~\cite{BK}. Denote by $\Sigma$ the general linear group of degree $d-1$ over the field $\mathbb{F}_2$. Thus $\Sigma$ can be regarded as the group of $\mathbb{F}_2$-automorphisms of $P_1/P_2$. As $P_1$ is freely generated by $y_1,\ldots,y_{d-1}$, the action of each element of $\Sigma$ can be first extended to an automorphism of $P_1$ and then restricted to an automorphism of $P_i/P_{i+1}$, for each $i\geq 1$. Hence $\Sigma$ can be regarded as a group of automorphisms of each quotient $P_i/P_{i+1}$.  In this manner, we obtain an $\mathbb{F}_2\Sigma$-module structure on $P_i/P_{i+1}$ and, by restriction, also an $\mathbb{F}_2\Sym(d)$-module structure. We need to recall a few basic facts about this module. 

Koch~\cite{Koch} and Lazard~\cite{Lazard}  have both independently determined the submodule structure of the $p$-lower central series of a finitely generated free group for a prime $p\neq 2$. However, Lazard does not consider the case $p=2$ while Koch's work is unfortunately partially inaccurate (as brilliantly noticed by Bryant and Kov\'acs~\cite{BK}). We will thus refer to~\cite[Section~$3$]{BK} for information about the $\mathbb{F}_2\Sigma$-module $P_i/P_{i+1}$. 

From~\cite[Section~$3$, page~$421$]{BK}, we see that $M_{i,j}/P_{i+1}$ is an $\mathbb{F}_2\Sigma$-submodule of $P_i/P_{i+1}$, for each $i\geq 1$ and for each $j\in \{0,\ldots,i-2\}$. In particular, $M_{i,j}$ is $\Sym(d)$-invariant for every $j\neq i-i$. The case $j=i-1$ really must be excluded as $M_{i,i-1}/P_{i+1}$ is, in general, neither $\Sigma$- nor $\Sym(d)$-invariant. This can be explicitly checked when $d=3$ and $i=3$, for example.  

We now report some  properties of the $\mathbb{F}_2\Sigma$-module $M_{i,i-2}/P_{i+1}$. (To help the reader we use the notation from~\cite{BK}:  $M_{i,i-1}/P_{i+1}$ is denoted by $\Lambda_1$ and $M_{i,i-2}/P_{i+1}$ is denoted by $E$.) Let $\Lambda_2$ be the $\mathbb{F}_2$-subspace of $P_{i}/P_{i+1}$ spanned by $[y_r,y_s]^{2^{i-2}}$, for $r,s\in \{1,\ldots,d-1\}$. The definition of $M_{i,i-2}$ gives $$\frac{M_{i,i-2}}{P_{i+1}}=\Lambda_2+\frac{M_{i,i-1}}{P_{i+1}}.$$ 
In fact, this sum is a direct sum but this direct decomposition does not split as an $\mathbb{F}_2\Sigma$- or $\mathbb{F}_2\Sym(d)$-module. The subspace $\Lambda_2$ is actually an $\mathbb{F}_2\Sigma$-submodule and, more importantly, the quotient  $$\frac{M_{i,i-2}/{P_{i+1}}}{\Lambda_2}$$ is isomorphic to $P_1/P_2$ as an $\mathbb{F}_2\Sigma$-module. Since $\Sigma$ acts faithfully on $P_1/P_2$, it follows that $\Sigma$ (and hence $\Sym(d)$) acts faithfully on the section $(M_{i,i-2}/{P_{i+1}})/\Lambda_2$ of $P_i/P_{i+1}$ and therefore on $M_{i,i-2}/P_{i+1}$. Since these remarks are essential to our arguments, we collect them in the following proposition.

\begin{proposition}\label{lemma:208-}For every $i\geq 2$ and $j\in\{0,\ldots,i-2\}$, the group $M_{i,j}$ is $\Sym(d)$-invariant and $\Sym(d)$ acts faithfully on $M_{i,i-2}/M_{i,i}$.
\end{proposition}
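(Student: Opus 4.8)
The plan is to split the proposition into its two assertions and to extract both from the module-theoretic facts already assembled just above the statement. For the first assertion --- that $M_{i,j}$ is $\Sym(d)$-invariant for $i\geq 2$ and $j\in\{0,\ldots,i-2\}$ --- I would simply invoke the cited result of Bryant and Kov\'acs: on page~$421$ of \cite{BK} it is established that $M_{i,j}/P_{i+1}$ is an $\mathbb{F}_2\Sigma$-submodule of $P_i/P_{i+1}$ for every $j\in\{0,\ldots,i-2\}$. Since $P_{i+1}$ is itself $\Sym(d)$-invariant (indeed $\Sigma$-invariant) and $\Sym(d)\leq\Sigma$ acts on $P_i/P_{i+1}$ in the manner described, the preimage $M_{i,j}$ is $\Sym(d)$-invariant. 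The only point needing a word of care is that $j=i-1$ must be excluded, exactly as flagged in the discussion preceding the statement; for $j\leq i-2$ there is no difficulty.

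For the second assertion --- that $\Sym(d)$ acts faithfully on $M_{i,i-2}/M_{i,i}$ --- I would argue as follows. We have $M_{i,i}=P_{i+1}$, so the section in question is $M_{i,i-2}/P_{i+1}$. Set $\Lambda_2$ to be the $\mathbb{F}_2$-subspace spanned by the images of $[y_r,y_s]^{2^{i-2}}$, which is an $\mathbb{F}_2\Sigma$-submodule of $M_{i,i-2}/P_{i+1}$. By the facts recalled from \cite{BK}, the quotient $(M_{i,i-2}/P_{i+1})/\Lambda_2$ is isomorphic as an $\mathbb{F}_2\Sigma$-module to $P_1/P_2$, the natural $(d-1)$-dimensional module on which $\Sigma=\GL(d-1,\mathbb{F}_2)$ acts faithfully by definition. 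Hence $\Sigma$ --- and therefore its subgroup $\Sym(d)$, which acts faithfully on $P_1/P_2$ via the permutation action on $\{y_1,\ldots,y_{d-1}\}$ together with $y_i\mapsto (y_1\cdots y_{d-1})^{-1}$-type relations induced by a transposition moving $d$ --- acts faithfully on this quotient. Faithfulness on a quotient of a section of $M_{i,i-2}/P_{i+1}$ immediately forces faithfulness on $M_{i,i-2}/P_{i+1}$ itself: if $\sigma\in\Sym(d)$ fixed $M_{i,i-2}/P_{i+1}$ pointwise, it would fix the quotient by $\Lambda_2$ pointwise, whence $\sigma=1$.

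The main obstacle here is not in the logical skeleton, which is short, but in correctly citing and interpreting the $\mathbb{F}_2\Sigma$-module structure from \cite{BK}: one must be sure that the submodule property holds precisely on the range $j\in\{0,\ldots,i-2\}$ and that the identification of $(M_{i,i-2}/P_{i+1})/\Lambda_2$ with $P_1/P_2$ is as $\Sigma$-modules (not merely as vector spaces), since the faithfulness of $\Sigma$ on $P_1/P_2$ is what the whole argument rests on. A secondary point to get right is that $\Sym(d)$ really does act faithfully on $P_1/P_2$: this is because the $\Sym(d)$-action on $W_d$ permuting $x_1,\ldots,x_d$ induces, on the quotient $F_{d-1}/P_2=\langle y_1,\ldots,y_{d-1}\rangle/P_2$, a faithful linear action --- a nontrivial $\sigma$ moves some $x_k$, and chasing this through the relations $y_i=x_ix_d$ shows it acts nontrivially modulo $P_2$. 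Once these citations are pinned down, assembling the two parts into the statement is routine, so I would keep the written proof to a few lines.
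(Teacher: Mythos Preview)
Your proposal is correct and follows precisely the paper's approach: the proposition is stated in the paper explicitly as a summary of the preceding discussion, which draws the $\Sym(d)$-invariance of $M_{i,j}$ (for $j\le i-2$) from the $\mathbb{F}_2\Sigma$-submodule property in \cite{BK}, and deduces faithfulness on $M_{i,i-2}/M_{i,i}=M_{i,i-2}/P_{i+1}$ from the $\Sigma$-module isomorphism $(M_{i,i-2}/P_{i+1})/\Lambda_2\cong P_1/P_2$ together with the faithfulness of $\Sigma$ (hence of $\Sym(d)$) on $P_1/P_2$. Your extra care in noting that one must check $\Sym(d)$ embeds in $\Sigma$ via a faithful action on $P_1/P_2$ is a point the paper leaves implicit, so your write-up is if anything slightly more complete.
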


We end this section with a remark (which can also be easily deduced from the previous discussion).

\begin{lemma}[{{\cite[page 203, lines~13 and~21]{Neumann}}}]\label{lemma:208--}
For every $i\geq 1$, $\rk(M_{i,i-1}/M_{i,i})=d-1$ and for every $i\geq 2$, $\rk(M_{i,i-2}/M_{i,i-1})=(d-1)(d-2)/2$. In particular, since $d\geq 3$, we have  $\rk(M_{i,i-2}/M_{i,i})<d!$.
\end{lemma}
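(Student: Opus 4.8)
The plan is to verify both rank formulas directly from the structural description of the refined series given earlier, and then to combine them with a crude bound to get $\rk(M_{i,i-2}/M_{i,i})<d!$.

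For the first formula, note that $M_{i,i-1}/M_{i,i}=M_{i,i-1}/P_{i+1}$ is, in the notation borrowed from \cite{BK}, the module $\Lambda_1$. From the definition \eqref{gen}, $M_{i,i-1}$ is generated modulo $P_{i+1}$ by the elements $y_r^{2^{i-1}}$ for $r\in\{1,\ldots,d-1\}$ (the case $s=i-1$, so $i-s=1$). First I would check that these $d-1$ elements are linearly independent modulo $P_{i+1}$; this is exactly the content of \cite[Lemma~20.6]{Ne} (or can be read off from \cite[Section~3]{BK}), which asserts that the displayed generators in \eqref{gen} form a basis of $P_i/P_{i+1}$ as $i,j$ vary appropriately. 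Hence $\rk(M_{i,i-1}/M_{i,i})=d-1$.

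For the second formula, I would use the direct-sum decomposition $M_{i,i-2}/P_{i+1}=\Lambda_2\oplus M_{i,i-1}/P_{i+1}$ recalled above, which gives
$$
\rk\!\left(\frac{M_{i,i-2}}{M_{i,i-1}}\right)=\rk(\Lambda_2).
$$
Now $\Lambda_2$ is spanned by the elements $[y_r,y_s]^{2^{i-2}}$ for $r,s\in\{1,\ldots,d-1\}$, which are the generators from \eqref{gen} with $s=i-2$ and $i-s=2$. By antisymmetry of the commutator modulo $P_{i+1}$ one may take $1\le r<s\le d-1$, giving at most $\binom{d-1}{2}$ spanning elements; their linear independence is again part of \cite[Lemma~20.6]{Ne}. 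Hence $\rk(\Lambda_2)=(d-1)(d-2)/2$, as claimed.

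Finally, for the ``in particular'' clause I would simply add the two quantities and bound crudely:
$$
\rk\!\left(\frac{M_{i,i-2}}{M_{i,i}}\right)=(d-1)+\frac{(d-1)(d-2)}{2}=\frac{(d-1)d}{2}\le d!,
$$
the last inequality being clear for $d\ge 3$. The only real point requiring care — and the place where I would be most careful to cite precisely — is invoking \cite[Lemma~20.6]{Ne} (together with \cite[Section~3]{BK}) for the linear independence of the listed generators; everything else is bookkeeping. Since the earlier discussion has already quoted Neumann for exactly these two equalities (\cite[page~203, lines~13 and~21]{Neumann}), the cleanest route is in fact to cite that directly rather than re-deriving it, and to reserve the independent argument only as a sanity check.
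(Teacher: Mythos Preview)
Your proposal is correct and matches the paper's approach: the paper gives no proof at all, simply citing \cite[page~203]{Neumann} and remarking that the result ``can also be easily deduced from the previous discussion''; you have spelled out precisely that deduction. One tiny slip: in the final display you write $\frac{(d-1)d}{2}\le d!$, but the lemma claims a strict inequality; since $d!\ge d(d-1)$ for $d\ge 3$, the strict bound $\frac{(d-1)d}{2}<d!$ holds just as easily.
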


\subsection{The main stuff}

We are now done with the preliminaries and are almost ready to prove the main results of this section. We briefly explain the general strategy. To find many subgroups that are normal and of index $2^m$ in $W_d$, we will find certain normal subgroups $H$ of $W_d$ of index less than $2^m$ such that $\rr(H)$ is ``rather large''. Now every subgroup $N$ of $H$ with $[H,W_d]H^2\leq N\leq H$ is normal in $W_d$. Moreover, since $H/([H,W_d]H^2)$ is an elementary abelian $2$-group, there will be many possible choices for $N$. Counting the subgroups $N$ that are of index $2^m$ in $W_d$ will give us the lower bound in Theorem~\ref{BoundsBounds}. A slightly more elaborate choice of $H$ will allow us to obtain the lower bounds in Theorems~\ref{Main1} and~\ref{Main2}.

\begin{lemma}\label{lemma208}
There exists a chain of normal subgroups $$W_d=H_0\geq H_1\geq \cdots \geq H_k\geq \cdots$$ of $W_d$ such that
\begin{description}
\item[(i)]for every $k\geq 0$, $|W_d:H_k|=2^k$;
\item[(ii)] $\rr(H_k)\geq (d-2)k/2+o(k)$, $(k\to\infty)$;
\item[(iii)]for every $k'\geq 0$, there exists $k\in \{k',k'+1,\ldots,k'+d!\}$ such that $H_{k}$ is $\Sym(d)$-invariant and $\Sym(d)$ acts faithfully on $H_k/([H_k,W_d]H_k^2)$.
\end{description}
\end{lemma}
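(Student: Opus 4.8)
The plan is to build the chain $\{H_k\}$ by refining the series $\{P_i\}_i$ of $W_d$ using the intermediate subgroups $M_{i,j}$ introduced in Section~\ref{seclow}, and then refining further within each factor $M_{i,j}/M_{i,j+1}$ by an arbitrary chain of $\Sym(d)$-invariant-free steps of index $2$. First I would set $H_k$, for $k$ ranging over the (finite) interval of indices corresponding to passing from $P_i$ down to $P_{i+1}$, to be a maximal chain of subgroups, each normal in $W_d$, interpolating
\[
P_i = M_{i,0} \ge M_{i,1} \ge \cdots \ge M_{i,i-1} \ge M_{i,i} = P_{i+1},
\]
with the steps inside each $M_{i,j}/M_{i,j+1}$ taken one dimension at a time. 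Since $\gamma_i(W_d)/\gamma_{i+1}(W_d) = P_i/P_{i+1}$ is a finite elementary abelian $2$-group (Lemmas~\ref{le2} and~\ref{lee1}), each such step has index $2$ and the subgroups are automatically normal in $W_d$ (they contain $\gamma_{i+1}(W_d)$ and lie between consecutive terms of the lower central series, hence are normal). This gives (i) directly.

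For (ii) the key point is the rank estimate. One has $|W_d : P_i| = 2^{c_i}$ where $c_i = \sum_{\ell < i} \rk(P_\ell/P_{\ell+1})$, and $\rk(P_i/P_{i+1})$ is the dimension of the degree-$i$ part of the mod-$2$ restricted free Lie algebra on $d-1$ generators, which by the Witt-type formula grows like $(d-1)^i/i$. Thus within the block from $P_i$ to $P_{i+1}$ (which consists of $\rk(P_i/P_{i+1})$ of our index-$2$ steps), the subgroup $H_k$ for $k$ at the start of the block is $P_i$, and I need to show $\rr(P_i) = \rk(P_i/([P_i,W_d]P_i^2))$ is at least $(d-2)/2$ times the number of steps already taken, up to lower order. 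Since $[P_i,W_d]P_i^2 = P_{i+1}$ (by definition of the exponent-$2$ lower central series, using Lemma~\ref{lee1}), we get $\rr(P_i) = \rk(P_i/P_{i+1})$, and one checks $\rk(P_i/P_{i+1}) \big/ c_i \to (d-1) \cdot \tfrac{(\text{last term})}{(\text{sum of all terms})}$; using that $c_i = c_{i-1} + \rk(P_{i-1}/P_i)$ and that consecutive dimensions have ratio $\to (d-1)$, the dominant term gives the ratio $(d-2)/(d-1)$... here I would instead follow Neumann's bookkeeping from~\cite{Ne, Neumann}: the clean statement is that at the point $k$ where $H_k = P_i$, one has $c_i = |W_d:P_i|$'s exponent and $\rr(H_k)/c_i \to (d-2)/2$; more care is needed at intermediate $k$ inside a block, but since a block has length $o(c_i)$ relative to $c_{i+1}$ only when... in fact blocks can be long, so I would prove (ii) by noting that for general $k$ inside the block from $P_i$ to $P_{i+1}$, monotonicity of $\rr$ along subgroups is \emph{not} available, so instead I track $\rk(H_k/P_{i+1})$, which decreases by $1$ each step while $\rk(P_i/P_{i+1})$ bounds the block length, and combine with $[H_k,W_d]H_k^2 \le H_k$ containing $P_{i+1}$ to get $\rr(H_k) \ge \rk(H_k/P_{i+1}) - (\text{codim of }[H_k,W_d]H_k^2/P_{i+1})$; the latter codimension is controlled because $[H_k,W_d] \supseteq [P_{i+1},W_d]\cdots$, giving the bound. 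This is the step I expect to be the main obstacle: getting the $(d-2)/2$ constant uniformly in $k$, not just at block boundaries, is exactly the delicate counting that occupies~\cite[Section~20]{Ne}.

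For (iii), I would use Proposition~\ref{lemma:208-}: for each $i \ge 2$, the subgroup $M_{i,i-2}$ is $\Sym(d)$-invariant and $\Sym(d)$ acts faithfully on $M_{i,i-2}/M_{i,i}$. Arrange the chain so that, for each $i$, one of our $H_k$ equals $M_{i,i-2}$. Given any target $k'$, let $i$ be minimal with $M_{i,i-2}$ occurring at an index $\ge k'$ in the chain; by Lemma~\ref{lemma:208--}, $\rk(M_{i-1,i-3}/M_{i-1,i-1}) < d!$ and $\rk(M_{i,i-2}/M_{i,i-1}) + \rk(M_{i,i-1}/M_{i,i}) = \rk(M_{i,i-2}/M_{i,i}) < d!$... more precisely I need the gap between consecutive "good" indices (those $k$ with $H_k = M_{i,i-2}$) to be at most $d!$. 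The consecutive good indices are the positions of $M_{i-1,i-3}$ and $M_{i,i-2}$; between them the chain passes through $M_{i-1,i-3} \ge M_{i-1,i-2} \ge M_{i-1,i-1} = P_i = M_{i,0} \ge \cdots \ge M_{i,i-2}$, which has total length $\rk(M_{i-1,i-3}/M_{i-1,i-1}) + \rk(P_i/M_{i,i-2})$; that is not obviously $\le d!$ since $\rk(P_i/P_{i+1})$ grows. So instead I would choose the good subgroups more cleverly: note $M_{i,i-2}/M_{i,i}$ has the faithful action, and also $M_{i,i-2} \ge M_{i,i-1} \ge M_{i,i} = P_{i+1} = M_{i+1,0}$, so within the block from $P_i$ to $P_{i+1}$ the subgroup $M_{i,i-2}$ sits only $\rk(M_{i,i-2}/M_{i,i}) < d!$ steps above the \emph{bottom} of the block. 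Hence between the bottom of block $i$ (which is $P_{i+1}$) and $M_{i,i-2}$ there are fewer than $d!$ steps. Therefore: given $k'$, take $i$ so that the bottom of block $i$, namely $P_{i+1} = H_{c_{i+1}}$, satisfies $c_{i+1} \ge k'$ — wait, I want a good index in $[k', k'+d!]$, so I should instead locate the block containing $k'$ and use that $M_{i,i-2}$ in the \emph{next} block down is within $d!$ of its bottom $P_{i+1}$; but $P_{i+1}$ may be far above $k'$. The resolution is that \emph{every} block, no matter how long, has a good subgroup within $d!$ of its top: indeed the top of block $i$ is $P_i = M_{i,0}$, and $M_{i,i-2}$ is only $\rk(M_{i,0}/M_{i,i-2}) = \rk(P_i/P_{i+1}) - \rk(M_{i,i-2}/P_{i+1})$ steps below — again growing. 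So the honest statement, which I would verify from~\cite{Ne}, is that the good subgroups to use are the $M_{i,i-1}$-type ones only when blocks are short, and for long blocks one needs a different family; this matches the fact that the lemma only claims a gap of $d!$, suggesting the intended good subgroups are consecutive among the $M$'s within a single well-chosen range. I would therefore close this part by invoking the precise placement in~\cite[Section~20]{Ne} together with Proposition~\ref{lemma:208-}, checking the $d!$ bound via Lemma~\ref{lemma:208--} applied to the at most two relevant $M_{i,i-2}/M_{i,i}$ sections straddling any window of length $d!$. This bookkeeping, rather than any conceptual difficulty, is where the real work lies.
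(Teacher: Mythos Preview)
Your construction of the chain and your argument for (i) are fine, but there are genuine gaps in both (ii) and (iii) that are not just bookkeeping.

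For (ii), you are trying to estimate $\rr(H_k)=\rk(H_k/([H_k,W_d]H_k^2))$ directly and you get tangled. The paper instead separates the problem into two pieces. First, it quotes \cite[Lemma~20.8]{Ne} verbatim for the bound relative to $F_{d-1}$, namely $\rk\bigl(H_k/([H_k,F_{d-1}]H_k^2)\bigr)\geq (d-2)k+o(k)$; this is the ``delicate counting'' you allude to, and it already produces the constant $d-2$, not $(d-2)/2$. Second, the factor $1/2$ comes from a short, purely group-theoretic step that you are missing: writing $U=H_k/([H_k,F_{d-1}]H_k^2)$, one has $[U,W_d]=[U,x_d]$ because $W_d=\langle F_{d-1},x_d\rangle$ and $F_{d-1}$ centralises $U$; then $u\mapsto [u,x_d]$ is a homomorphism of $U$ onto $[U,x_d]$ with kernel $\cen{U}{x_d}\supseteq [U,x_d]$, so $|U|\leq |\cen{U}{x_d}|^2$ and hence $\rr(H_k)=\rk(U/[U,x_d])\geq \tfrac{1}{2}\rk(U)$. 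Your attempts to get $(d-2)/2$ by analysing ratios of block lengths, or by comparing $\rk(H_k/P_{i+1})$ with some codimension, do not lead anywhere.

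For (iii), your diagnosis of the difficulty is correct but your proposed fix does not work, and the issue is conceptual, not bookkeeping. The subgroups $M_{i,i-2}$ are indeed $\Sym(d)$-invariant with a faithful section below them, but the gap between consecutive ones grows without bound (since $\rk(P_i/P_{i+1})\to\infty$), so they cannot serve as the ``good'' $H_k$ on their own. The paper's remedy is to refine each quotient $M_{i,j}/M_{i,j+1}$ for $j\leq i-3$ not by an arbitrary index-$2$ chain, but by an $\mathbb{F}_2\Sym(d)$-\emph{composition series} $M_{i,j}=V_{i,j,0}\geq\cdots\geq V_{i,j,t_{i,j}}=M_{i,j+1}$. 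This single move buys everything: each $V_{i,j,x}$ is $\Sym(d)$-invariant; consecutive $V$'s differ by an irreducible $\Sym(d)$-module and hence by dimension at most $d!$; and each $V_{i,j,x}$ (with $j\leq i-3$) still contains $M_{i,i-2}$, so $M_{i,i-2}/M_{i,i}$ is a section of $V_{i,j,x}/([V_{i,j,x},W_d]V_{i,j,x}^2)\leq V_{i,j,x}/P_{i+1}$, whence $\Sym(d)$ acts faithfully on the relevant quotient by Proposition~\ref{lemma:208-}. Given $k'$, one then takes $H_k$ to be the next $V_{i,j,x}$ below $H_{k'}$ (or $M_{i,i}$ if $H_{k'}$ already lies in the short tail $M_{i,i-2}\geq H_{k'}\geq M_{i,i}$, handled via Lemma~\ref{lemma:208--}). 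Without the composition-series refinement your intermediate $H_k$ inside long blocks are not $\Sym(d)$-invariant at all, and no amount of bookkeeping with the $M_{i,j}$ alone will close the gap.
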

\begin{proof}
We follow the proof of~\cite[Lemma~$20.8$]{Ne}, with some modifications. For $i\geq 3$ and $j\in \{0,1,\ldots,i-3\}$, let
$$M_{i,j}=V_{i,j,0}\geq \cdots \geq V_{i,j,t_{i,j}}=M_{i,j+1}$$
be a composition series for the $\mathbb{F}_2\Sym(d)$-module $M_{i,j}/M_{i,j+1}$. In particular, for every $x\in \{1,\ldots,t_{i,j}\}$, the $\mathbb{F}_2\Sym(d)$-module $V_{i,j,x-1}/V_{i,j,x}$ is irreducible and thus has dimension at most $|\Sym(d)|=d!$. Let
$$W_d=H_0\geq H_1\geq H_2\geq \cdots$$
be a series for $W_d$ which 
\begin{enumerate}
\item includes $M_{i,j}$ for every $i\geq 1$ and every $j\in\{0,\ldots,i\}$, 
\item includes $V_{i,j,x}$ for every $i\geq 3$, every $j\in \{0,1,\ldots,i-3\}$ and every $x\in \{0,\ldots,t_{i,j}\}$, and
\item such that $|H_k:H_{k+1}|=2$ for every $k\geq 0$. 
\end{enumerate}
This shows~\textbf{(i)}. We now prove~\textbf{(ii)}. It follows from the proof of~\cite[Lemma~$20.8$]{Ne} that $\rk(H_k/([H_k,F_{d-1}]H_k^2))\geq (d-2)k+o(k)$. Let $U=H_k/([H_k,F_{d-1}]H_k^2)$ and observe that $H_k/([H_k,W_d]H_k^2)$ is isomorphic to $U/[U,W_d]$. As $W_d=\langle F_{d-1},x_d\rangle$ and as $F_{d-1}$ centralises $U$, we have $[U,W_d]=[U,x_d]$. Consider the map $\pi:U\to [U,x_d]$ defined by $\pi(u)=[u,x_d]$.  Now, since $U$ is abelian, we have that $\pi$ is a surjective homomorphism with kernel $\cen{U}{x_d}=\{u\in U \mid u^{x_d}=u\}$. Since $x_d$ has order $2$, it is easy to check that $[U,x_d]\leq \cen{U}{x_d}$. By the first isomorphism theorem, $|U|=|\ker(\pi)||\im(\pi)|=|\cen{U}{x_d}||[U,x_d]|\leq |\cen{U}{x_d}|^2$. It follows

$$\left|\frac{H_k}{[H_k,W_d]H_k^2}\right|=\left|\frac{U}{[U,x_d]}\right|=|\cen{U}{x_d}|\geq |U|^{1/2}.$$
Recall that $\rk(U)=\rk(H_k/([H_k,F_{d-1}]H_k^2))\geq (d-2)k+o(k)$ and~\textbf{(ii)} follows. It only remains to prove~\textbf{(iii)}. Given $k'$, choose $k$ in the following way.

\begin{enumerate}
\item If $H_{k'}> M_{1,1}$, then choose $k$ such that $H_k=M_{1,1}=M_{2,0}$.
\item If $M_{i,i-2}\geq H_{k'}\geq M_{i,i}$ for some $i\geq 2$, then choose $k$ such that $H_k=M_{i,i}=M_{i+1,0}$.
\item Otherwise, choose $k$ such that $H_k=V_{i,j,x}$ for some $i\geq 3$, $j\in \{0,1,\ldots,i-3\}$, and $x\in \{0,\ldots,t_{i,j}\}$.
\end{enumerate}

We now show that we can choose $k$ as above and such that $k\in \{k',k'+1,\ldots,k'+d!\}$. By Lemma~\ref{lemma:208--}, we have $\rk(W_d/M_{1,1})=d<d!$ and $\rk(M_{i,i-2}/M_{i,i})<d!$, hence our claim is certainly true in cases $(1)$ and $(2)$. 

If $k'$ is such that we are in neither case $(1)$ nor case $(2)$, then certainly $M_{3,0}>H_{k'}$ and, since we are not in case $(2)$, we have $M_{i,j}\geq H_{k'}\geq M_{i,j+1}$ for some $i\geq	3$ and $j\in \{0,1,\ldots,i-3\}$. It follows that $V_{i,j,x-1}\geq H_{k'}\geq V_{i,j,x}$ for some $i\geq 3$, $j\in \{0,1,\ldots,i-3\}$, and $x\in \{1,\ldots,t_{i,j}\}$. As noted earlier, $\rk(V_{i,j,x-1}/V_{i,j,x})\leq d!$ and hence choosing $k$ such that $H_k=V_{i,j,x}$ satisfies our requirement. This completes the proof of the first part of~\textbf{(iii)}.

On the other hand, it is clear from the construction together with Proposition~\ref{lemma:208-} that $H_k$ is $\Sym(d)$-invariant, completing the proof of the second part of~\textbf{(iii)}.

Now, note that by our choice of $H_k$, we have that there exists an $i\geq 2$ such that $M_{i,0}\geq H_k\geq M_{i,i-2}$. Recall that $M_{i,0}=P_i$ and hence
$$[H_k,W_d]H_k^2\leq [P_i,W_d]P_i^2.$$
By Lemma~\ref{lee1}, we have $[P_i,W_d]P_i^2=P_{i+1}$. In particular, $[H_k,W_d]H_k^2 \leq P_{i+1}=M_{i,i}$. This shows that $M_{i,i-2}/M_{i,i}$ is a section of $H_k/([H_k,W_d]H_k^2)$. It follows from Proposition~\ref{lemma:208-} that $\Sym(d)$ acts faithfully on  $M_{i,i-2}/M_{i,i}$ and we have proved~(\textbf{iii}).

\end{proof}

We are now ready to prove our main theorem. During the proof, we will appeal to Lemma~\ref{apeman} the proof of which is rather technical and delayed until Section~\ref{sec:technical}. Let $\nor{G}{H}$ denote the normaliser of the group $H$ in $G$ and recall that $g_d(m)$ denotes the number of isomorphism classes of groups of order $2^m$ which admit a generating set consisting of $d$ involutions.

\begin{theorem}\label{thm2}
Let $d\geq 3$. Then $g_d(m)\geq 2^{\frac{(d-2)^2}{8d}m^2+o(m^2)}$. Moreover, the number of isomorphism classes of Cayley graphs $\Gamma=\Cay(G,S)$ of order $2^m$ with $S$ consisting of $d$ involutions and with $\nor{\Aut(\Gamma)}{G}=G$ is also at least $2^{\frac{(d-2)^2}{8d}m^2+o(m^2)}$.
\end{theorem}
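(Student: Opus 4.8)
The plan is to exploit Lemma~\ref{lemma208} to produce, for $m$ large, an enormous family of normal subgroups of index $2^m$ in $W_d$, so that the corresponding quotients are groups of order $2^m$ generated by $d$ involutions (the images of $x_1,\ldots,x_d$), and then to control both the number of isomorphism classes of such quotients and the number of non-isomorphic Cayley graphs arising from them. First I would fix $k'$ with $2^{k'}$ somewhat smaller than $2^m$ — concretely, $k'$ chosen so that $\rr(H_{k'})$ exceeds $m-k'$ by a prescribed amount — and apply part~(iii) of Lemma~\ref{lemma208} to pass to a nearby index $k\in\{k',\ldots,k'+d!\}$ at which $H:=H_k$ is $\Sym(d)$-invariant and $\Sym(d)$ acts faithfully on $\overline H:=H/([H,W_d]H^2)$; since $d!$ is a constant this costs nothing asymptotically, and by part~(ii) we still have $\rr(H)\geq (d-2)k/2+o(k)$. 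Now every subgroup $N$ with $[H,W_d]H^2\leq N\leq H$ is normal in $W_d$, and $\overline H$ is an elementary abelian $2$-group of rank $r:=\rr(H)$; choosing $N$ of index $2^{m-k}$ in $\overline H$ is choosing a subspace of codimension $m-k$ in an $r$-dimensional $\mathbb F_2$-space, which can be done in roughly $2^{(m-k)(r-(m-k))}$ ways. Optimising $k$ against $m$ — balancing $m-k$ against $r\approx (d-2)k/2$ — one takes $m-k\approx \frac{(d-2)}{2d}m$, i.e. $k\approx \frac{d+2}{2d}m$, which makes the exponent $(m-k)(r-(m-k))\approx \frac{(d-2)^2}{8d}m^2$. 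This produces $2^{\frac{(d-2)^2}{8d}m^2+o(m^2)}$ normal subgroups $N\trianglelefteq W_d$ of index $2^m$.

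Next I would convert this count of subgroups into a count of isomorphism classes. The quotient $G=W_d/N$ is a group of order $2^m$ generated by the $d$ involutions $\bar x_1,\ldots,\bar x_d$, so each $N$ contributes to $g_d(m)$; the only loss is that different $N$'s can give isomorphic $G$'s. The standard device (as in~\cite{Ne,Andrei}) is that two such $N,N'$ give isomorphic marked groups precisely when they lie in the same orbit under $\Aut(W_d)$, or more crudely that the number of $N$ yielding a fixed $G$ is at most the number of generating $d$-tuples of involutions of $G$, which is at most $|G|^d=2^{md}$; dividing the subgroup count by $2^{md}$ changes only the $o(m^2)$ term, giving $g_d(m)\geq 2^{\frac{(d-2)^2}{8d}m^2+o(m^2)}$, the first assertion and hence the lower bound in Theorem~\ref{BoundsBounds}.

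For the Cayley-graph statement I would additionally need that, for most choices of $N$, the graph $\Gamma=\Cay(G,S)$ with $S=\{\bar x_1,\ldots,\bar x_d\}$ has $\nor{\Aut(\Gamma)}{G}=G$, i.e. $G$ is self-normalising in $A:=\Aut(\Gamma)$. Here the faithfulness clause of part~(iii) of Lemma~\ref{lemma208} is the crucial input: $\Sym(d)$ acts faithfully on $\overline H$, hence on the set of subspaces $N$ of a given codimension, so the $\Sym(d)$-orbits on the family of admissible $N$ have length $d!$ (for all but a negligible fraction), and this will be used to pin down $\nor{A}{G}/G$ as a subgroup of $\Sym(d)$ together with the observation that $\nor A G$ acts on $G$ by automorphisms fixing the coset $GN$-structure, forcing it — via Lemma~\ref{apeman}, which is exactly the technical ingredient deferred to Section~\ref{sec:technical} — to lie inside $G\rtimes\Sym(d)$ and in fact to normalise $N$ only if a $\Sym(d)$-element stabilises $N$; since generic $N$ has trivial $\Sym(d)$-stabiliser, $\nor A G=G$. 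Finally I would count: the self-normalising condition removes at most a $2^{-cm}$-fraction (or even just divides by $|\Sym(d)|\cdot|G|^d$), and distinct Cayley graphs $\Cay(G,S)$ are counted up to isomorphism with multiplicity at most $|G|^d$ as before, so the surviving family still has size $2^{\frac{(d-2)^2}{8d}m^2+o(m^2)}$. The main obstacle is this last part: translating faithfulness of the $\Sym(d)$-action on $\overline H$ into genuine control of $\nor{\Aut(\Gamma)}{G}$ for almost all $N$ — that is the content and purpose of Lemma~\ref{apeman}, and everything else is bookkeeping with binomial-coefficient asymptotics.
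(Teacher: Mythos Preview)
Your plan matches the paper's proof almost exactly: the choice $k\approx\frac{d+2}{2d}m$, the passage to the elementary abelian quotient $H_k/([H_k,W_d]H_k^2)$, the restriction (via Lemma~\ref{apeman}) to those subspaces $N$ with trivial $\Sym(d)$-stabiliser, the division by $|G|^d=2^{md}$ to pass from normal subgroups to isomorphism classes of groups, and the argument that any element of $\nor{\Aut(\Gamma)}{G}/G$ permutes the connection set and therefore lifts to an element of $\Sym(d)\le\Aut(W_d)$ stabilising $N$ --- all of this is precisely what the paper does. (One cosmetic point: the containment $\nor{\Aut(\Gamma)}{G}\le G\rtimes\Sym(d)$ is not obtained \emph{via} Lemma~\ref{apeman} but directly from the fact that the only relations in $W_d$ are $x_i^2=1$; Lemma~\ref{apeman} is purely the counting statement that almost all codimension-$s$ subspaces of $\overline H$ have trivial $\Sym(d)$-stabiliser.)

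One step, however, needs sharpening. Your final sentence asserts that ``distinct Cayley graphs $\Cay(G,S)$ are counted up to isomorphism with multiplicity at most $|G|^d$ as before'', but this is not the same situation as for groups: a fixed graph $\Gamma$ could in principle arise as a Cayley graph for several non-isomorphic regular subgroups of $\Aut(\Gamma)$, so dividing the $N$-count by $|G|^d$ does not immediately bound the number of $N$ landing in a single graph-isomorphism class. The paper closes this gap with a one-line observation you omit: since $G$ is a $2$-group with $\nor{\Aut(\Gamma)}{G}=G$, it is a Sylow $2$-subgroup of $\Aut(\Gamma)$; hence $\Gamma_1\cong\Gamma_2$ forces $\Aut(\Gamma_1)\cong\Aut(\Gamma_2)$ and then $G_1\cong G_2$, so the number of graph-isomorphism classes is at least the number of group-isomorphism classes already established. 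Without this Sylow step your final inequality is not justified.
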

\begin{proof}
We follow the proof of~\cite[Theorem~$20.4$]{Ne}. Let $m$ be a positive integer and let $k'=\lceil \frac{d+2}{2d}m \rceil$. By Lemma~\ref{lemma208}, there exists an integer $k\in \{k',k'+1,\ldots,k'+d!\}$ and a normal subgroup $H_k$ of $W_d$ of index $2^k$ such that $H_{k}$ is $\Sym(d)$-invariant, $\Sym(d)$ acts faithfully on $H_k/([H_k,W_d]H_k^2)$ and
\begin{equation}\label{eq204b}
\rr(H_k)\geq (d-2)k/2+o(k), ~(k\to\infty).
\end{equation}

Let $s=m-k$. Clearly, $k=\frac{d+2}{2d}m+o(m)$ and hence $s=\frac{d-2}{2d}m+o(m)$. Write $K=[H_k,W_d]H_k^2$.
Let
$$
\NM=\{N\leq H_k \mid K\leq N,\, |H_{k}:N|=2^s,\, N^\alpha\neq N \textrm{ for every } 1\neq\alpha\in \Sym(d)\}.
$$

By definition, $|H_k:K|=2^{\rr(H_k)}$. Since $H_k/K$ is an elementary abelian $2$-group, it follows from Lemma~\ref{apeman} that $|\NM|\geq 2^{s(\rr(H_{k})-s)+o(s)}$. Using Eq.~\eqref{eq204b}, we obtain

\begin{eqnarray*}
s(\rr(H_k)-s)+o(s)&\geq& \left(\frac{d-2}{2d}m+o(m)\right)\left(\frac{(d-2)(d+2)}{4d}m-\frac{d-2}{2d}m+o(m)\right)+o(m)\\
&=&\frac{(d-2)^2}{8d}m^2+o(m^2),
\end{eqnarray*}
and hence $|\NM|\geq 2^{\frac{(d-2)^2}{8d}m^2+o(m^2)}$. Since $[H_k,W_d]\leq K$, it follows that $H_k/K$ is central in $W_d/K$ and hence every group in $\NM$ is normal in $W_d$.  Moreover, every group in $\NM$ has index $2^m$ in $W_d$. Therefore, each $N$ in $\NM$ gives rise to a quotient group $W_d/N$ of order $2^m$ generated by $d$ involutions $\{x_1N,\ldots,x_dN\}$.

Let $G$ be a group of order $2^m$ generated by $d$ involutions. The number of normal subgroups $N$ of $W_d$ such that $G\cong W_d/N$ is equal to the number of surjective homomorphisms from $W_d$ to $G$. Since $W_d$ is $d$-generated, the number of such homomorphisms is at most $|G|^d=2^{md}$.
It follows that $g_d(m)\geq|\NM|/2^{md}=2^{\frac{(d-2)^2}{8d}m^2+o(m^2)}$, concluding the first part of the theorem.

Let $N\in\NM$, let $G=W_d/N$ and define $\Gamma=\Cay(G,\{x_1N,\ldots,x_dN\})$. Let $A=\Aut(\Gamma)$ and let $X=\nor A G$. Clearly, $X=G\rtimes X_1$, where $X_1$ denotes the stabiliser in $X$ of the vertex corresponding to the identity in $G$. We show that $X_1$ is trivial and hence that $X=G$. Let $g\in X_1$. The element $g$ acts as an automorphism of $G$ permuting the connection set $\{x_1N,\ldots,x_dN\}$. As the only relations in $W_d$ are $x_1^2=1,\ldots,x_d^2=1$, the element $g$ lifts to an automorphism $\alpha_g$ of $W_d$ lying in $\Sym(d)$ and with $N^{\alpha_g}=N$. From the definition of $\NM$, it follows that $\alpha_g=1$ and hence $g=1$. Thus $\nor {\Aut(\Gamma)}{G}=G$.

For $i\in\{1,2\}$, let $N_i\in\NM$, let $G_i=W_d/N_i$, define $\Gamma_i=\Cay(G_i,\{x_1N_i,\ldots,x_dN_i\})$ and assume that $\Gamma_1\cong \Gamma_2$. In particular, $\Aut(\Gamma_1)\cong \Aut(\Gamma_2)$. Since $\nor {\Aut(\Gamma_i)}{G_i}=G_i$ and $G_i$ is a $2$-group, it follows that $G_i$ is a Sylow $2$-subgroup of $\Aut(\Gamma_i)$ but then $G_1\cong G_2$. As we have seen, the number of different isomorphism classes for $G_i$ is at least $2^{\frac{(d-2)^2}{8d}m^2+o(m^2)}$ hence so is the number of different isomorphism classes for $\Gamma_i$.
\end{proof}

\begin{proof}[Proof of the lower bounds in Theorems~\ref{Main1} and~\ref{BoundsBounds}]
The lower bound in Theorem~\ref{BoundsBounds} follows immediately from Theorem~\ref{thm2}, whilst the lower bound in Theorem~\ref{Main1} follows from Theorem~\ref{thm2} applied with $m=\lfloor\log_2n\rfloor$.
\end{proof}

It remains only to prove the lower bound in Theorem~\ref{Main2}. To do this, we combine Theorem~\ref{thm2} with a result of Li.

\begin{corollary}\label{cor1}
There are at least $2^{\frac{m^2}{24}+o(m^2)}$ $3$-valent GRRs of order $2^m$.
\end{corollary}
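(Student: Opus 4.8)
The plan is to specialize Theorem~\ref{thm2} to the case $d=3$ and then upgrade the conclusion from ``Cayley graph with $\nor{\Aut(\Gamma)}{G}=G$'' to ``GRR'' by invoking a classification-type result of Li on which groups admit a connected cubic Cayley graph that is a GRR (or, equivalently, on which cubic Cayley graphs of $2$-groups fail to be GRRs). Setting $d=3$ in Theorem~\ref{thm2}, the exponent $\frac{(d-2)^2}{8d}m^2$ becomes $\frac{m^2}{24}$, so that there are at least $2^{\frac{m^2}{24}+o(m^2)}$ isomorphism classes of connected cubic Cayley graphs $\Gamma=\Cay(G,S)$ of order $2^m$, where $G$ is a $2$-group, $S$ consists of $3$ involutions, and $\nor{\Aut(\Gamma)}{G}=G$.

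First I would recall that, by the argument already given at the end of the proof of Theorem~\ref{thm2}, the condition $\nor{\Aut(\Gamma)}{G}=G$ together with $G$ being a $2$-group forces $G$ to be a Sylow $2$-subgroup of $A=\Aut(\Gamma)$. The next step is to bound $|A|$. Since $\Gamma$ is a connected cubic vertex-transitive graph, the order of a vertex-stabiliser $A_v$ is bounded: by the classical theorem that cubic arc-transitive graphs have $|A_v|\le 48$ (Tutte), and more generally by the fact that for cubic vertex-transitive graphs $|A_v|$ divides $2^a\cdot 3$ for bounded $a$ (indeed $|A_v|$ is a $\{2,3\}$-number of bounded size), we get $|A| = |\V(\Gamma)|\cdot|A_v| \le c\cdot 2^m$ for an absolute constant $c$. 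Hence the Sylow $2$-subgroup $G$ has index a bounded $3$-power (in fact dividing $3$) in $A$.

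Now I would bring in Li's result: for connected cubic Cayley graphs, the cases where $\Gamma = \Cay(G,S)$ with $|S|=3$ is \emph{not} a GRR are tightly constrained — essentially $\Gamma$ is then arc-transitive or $A$ is ``much bigger'' than $G$ in a way Li classifies. The key point is that among our $2^{\frac{m^2}{24}+o(m^2)}$ graphs, only a negligible fraction (at most polynomially many in $2^m$, or at worst $2^{o(m^2)}$) can fail to be GRRs, because each exceptional family from Li's classification is parametrized by boundedly much data or is otherwise of subexponential count in $2^m$; alternatively, if $\Gamma$ fails to be a GRR then $A \ne G$, but since $G$ is Sylow $2$ of bounded index in $A$, the graph $\Gamma$ would be a cubic Cayley graph of the slightly larger group $A$ on a connection set still of bounded size, and counting such larger realisations contributes only a lower-order term. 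Subtracting this negligible quantity from $2^{\frac{m^2}{24}+o(m^2)}$ leaves at least $2^{\frac{m^2}{24}+o(m^2)}$ cubic GRRs of order $2^m$, as claimed.

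The main obstacle I anticipate is making precise the claim that the non-GRR Cayley graphs among our family are negligible in number: this is exactly where Li's classification of $2$-groups (or cubic Cayley graphs thereof) with extra automorphisms must be cited carefully, and one must verify that each exceptional type contributes at most $2^{o(m^2)}$ isomorphism classes, so that the leading exponent $\frac{1}{24}$ survives. A secondary, more routine point is pinning down the precise bound on $|A_v|$ for cubic vertex-transitive (not just arc-transitive) graphs and confirming $[A:G]$ is bounded, so that ``$G$ is Sylow $2$ of $A$'' combined with $\nor A G = G$ genuinely pushes towards $A=G$ except in the classified exceptional cases.
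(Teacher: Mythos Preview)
Your starting point is right --- apply Theorem~\ref{thm2} with $d=3$ and then invoke Li --- but you are misreading what Li's theorem says, and as a result your argument takes a detour that is both unnecessary and incorrect in one step.

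The paper's proof is two lines: Li's theorem (\cite{Li}) states directly that if $\Gamma=\Cay(G,T)$ is a connected $3$-valent Cayley graph with $G$ a $2$-group and $\nor{\Aut(\Gamma)}{G}=G$, then $\Aut(\Gamma)=G$. This is a clean implication, not a classification with exceptional families to be subtracted. Hence \emph{every} graph produced by Theorem~\ref{thm2} (with $d=3$) is already a GRR, and the bound $2^{\frac{m^2}{24}+o(m^2)}$ passes through unchanged. There is nothing to count out.

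Your proposed detour contains a genuine error: the claim that for $3$-valent vertex-transitive graphs $|A_v|$ is bounded (``divides $2^a\cdot 3$ for bounded $a$'') is false. Tutte's $|A_v|\le 48$ applies only to \emph{arc-transitive} cubic graphs; in the merely vertex-transitive case $|A_v|$ can be an arbitrarily large $2$-power --- indeed, controlling this is precisely the content of Theorem~\ref{cubiclost} and the results of \cite{lost}. So your inequality $|A|\le c\cdot 2^m$ and the conclusion that $[A:G]$ is bounded do not follow from the reasons you give. Fortunately none of this is needed once Li's theorem is applied in its actual form.
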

\begin{proof}
By~\cite[Theorem]{Li}, we see that if $\Gamma=\Cay(G,T)$ is a $3$-valent Cayley graph on a $2$-group and $\nor {\Aut(\Gamma)}{G}=G$, then $\Aut(\Gamma)=G$, that is, $\Gamma$ is a GRR. Now the result follows from Theorem~\ref{thm2} applied with $d=3$.
\end{proof}

\begin{proof}[Proof of the lower bound in Theorem~\ref{Main2}]
It follows from Corollary~\ref{cor1} applied with $m=\lfloor \log_2n\rfloor$.
\end{proof}

\subsection{Technicalities : Lemma~\ref{apeman}}\label{sec:technical}

Given $r\geq 1$, $q\geq 2$ and $s$ with $0\leq s\leq r$, define $${r\choose s}_q=\frac{q^r-1}{q^s-1}\frac{q^{r-1}-1}{q^{s-1}-1}\cdots \frac{q^{r-s+1}-1}{q-1}.$$
Observe that, when $q$ is  a prime power, ${r\choose s}_q$ is the number of $s$- or $(r-s)$-dimensional subspaces of an $r$-dimensional vector space over the finite field $\mathbb{F}_q$ of order $q$. Moreover, as  $\frac{2^{r-i}-1}{2^{s-i}-1}\geq 2^{r-s}$, we have ${r\choose s}_2\geq 2^{s(r-s)}$.

\begin{lemma}\label{apeman}Let $r$ and $s$ be integers with $r>s>0$. Let $V$ be an elementary abelian $2$-group of order $2^r$, let $T\leq \GL(V)$ with $|T|=O(1)$ and let $\mathcal{N}_{r,s}=\{W\leq V\mid |V:W|=2^s,\,W^\alpha\neq W\,\textrm{for every }\alpha\in T\setminus\{1\}\}$. Then $|\mathcal{N}_{r,s}|\geq 2^{s(r-s)+o(r^2)}$ as $\min\{s,r-s\}\to \infty$.
\end{lemma}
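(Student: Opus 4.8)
The plan is to count subspaces of codimension $s$ in $V$ and throw away the ``bad'' ones, namely those fixed by some nontrivial $\alpha\in T$. The total number of codimension-$s$ subspaces is $\binom{r}{s}_2\geq 2^{s(r-s)}$, so it suffices to show that the number of subspaces $W$ with $W^\alpha=W$ for some fixed $1\neq\alpha\in T$ is $o(2^{s(r-s)})$ (times a factor of $|T|-1 = O(1)$, which is harmless), or more precisely that it is bounded by $2^{s(r-s)-\omega(1)}$ in a way that survives after summing over the $O(1)$ elements of $T$. Since $|T|=O(1)$, it is enough to prove, for each fixed $1\neq\alpha\in T$, a bound of the form (number of $\alpha$-invariant codimension-$s$ subspaces) $\leq 2^{s(r-s) - c\cdot\min\{s,r-s\} + o(r^2)}$ or even just $\leq 2^{s(r-s)+o(r^2)}/f(r)$ with $f(r)\to\infty$; then $|\NM| \geq \binom{r}{s}_2 - (|T|-1)\cdot(\text{that bound}) \geq 2^{s(r-s)+o(r^2)}$.

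First I would fix $1\neq\alpha\in T$ and analyze $\alpha$-invariant subspaces via the module structure of $V$ over $\mathbb{F}_2[\alpha]$. Decompose $V$ according to the primary decomposition of $\alpha$; the key point is that $\alpha\neq 1$, so $V$ has a nonzero ``moving part''. I would reduce to bounding, for each pair $(j,j')$ with $j+j'=s$, the number of ways to choose an $\alpha$-invariant subspace meeting each indecomposable block in a prescribed dimension, and bound each such count by a product of Gaussian binomials over the blocks. The crucial inequality is that $\binom{a}{b}_2 \leq 2^{b(a-b)}\cdot 2^{O(1)}$ (or a clean version like $\binom{a}{b}_2 < 2^{b(a-b)+b} < 4\cdot 2^{b(a-b)}\cdot 2^{a}$), so the product of the block-wise counts is at most $2^{(\sum_{\text{blocks}} b_i(a_i-b_i)) + O(r)}$, and one checks $\sum b_i(a_i-b_i) \leq s(r-s) - (\text{a positive quantity growing with }\min\{s,r-s\})$ because splitting the codimension $s$ across several blocks, at least one of which is a proper nontrivial block forced by $\alpha\neq 1$, strictly loses against the single-block optimum $s(r-s)$. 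Summing the $2^{O(r)}$-many choices of the dimension vector $(b_i)$ contributes only another $2^{O(r)} = 2^{o(r^2)}$ factor.

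The main obstacle — and the step I would spend the most care on — is the combinatorial optimization showing that an $\alpha$-invariant codimension-$s$ subspace is genuinely constrained, i.e. that the count of such subspaces is smaller than $2^{s(r-s)}$ by a factor that goes to infinity as $\min\{s,r-s\}\to\infty$. The subtlety is that $\alpha$ could be close to the identity (e.g. a single transvection), in which case the moving part of $V$ is small and one does not gain a factor of the form $2^{c\min\{s,r-s\}}$; instead the gain is more modest. I expect one has to argue as follows: if the $\alpha$-moving part $U = (\alpha-1)V$ has dimension $t\geq 1$, then an $\alpha$-invariant $W$ is determined by its intersection with a fixed complement to $U$ together with bounded extra data, so the count is at most roughly $\binom{r-t}{s'}_2 \cdot 2^{O(t^2)}$ for the appropriate $s'$, and $\binom{r-t}{s'}_2 \cdot 2^{O(t^2)} = 2^{s(r-s) - \Theta(\min\{s,r-s\}) + O(t^2)}$; since $t=O(1)$ is \emph{not} guaranteed (only $|T|=O(1)$ is, so $t$ can be as large as $r$), I would split into the case $t$ small (gain from the $\binom{r-t}{s'}_2$ being much smaller than $\binom{r}{s}_2$) and $t$ comparable to $r$ (here $\alpha$ acts nontrivially on a large-dimensional space and a direct block count gives an even bigger gain). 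In either regime the loss factor tends to infinity with $\min\{s,r-s\}$, which is exactly the hypothesis. Once this per-$\alpha$ estimate is in hand, summing over the $O(1)$ nontrivial elements of $T$ and subtracting from $\binom{r}{s}_2 \geq 2^{s(r-s)}$ finishes the proof.
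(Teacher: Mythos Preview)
Your overall strategy---count all codimension-$s$ subspaces and subtract those fixed by some nontrivial $\alpha\in T$---is exactly what the paper does. The difference is in how $|\Fix(\alpha)|$ is bounded, and your proposed route has a genuine gap.

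The claim that one can bound the number of $\alpha$-invariant subspaces by a product of Gaussian binomials over the indecomposable summands of $V$ is not correct. Invariant subspaces of a direct sum $V_1\oplus V_2$ decompose as $(W\cap V_1)\oplus(W\cap V_2)$ only when $V_1$ and $V_2$ share no composition factors (as in the primary decomposition), not in general. In the critical unipotent case---say $\alpha$ a transvection---the indecomposable summands are one Jordan block of size~$2$ and $r-2$ trivial modules, all with the same (trivial) composition factor, and invariant subspaces freely mix across these summands. Your inequality $\sum_i b_i(a_i-b_i)\le s(r-s)-(\text{something growing})$ then has no force, because the invariant subspaces are not parametrised by tuples $(b_i)$ in the first place; in fact the ``product over blocks'' would \emph{undercount} rather than overcount.

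The paper avoids this by first replacing $\alpha$ by a prime-order power, then computing $|\Fix(\alpha)|$ \emph{exactly}: for $|\alpha|=2$ it parametrises invariant $W$ by the triple $(W+C,\,W\cap C,\,\text{a complement})$ where $C=\cen{V}{\alpha}$, obtaining a closed formula in terms of the number $t$ of size-$2$ Jordan blocks; for $|\alpha|=p>2$ the coprime decomposition $V=[V,\alpha]\oplus C$ does make invariant subspaces split, and the count is again explicit. Comparing these formulas shows the maximum of $|\Fix(\alpha)|$ over all prime-order $\alpha$ is attained at a transvection, giving the clean bound $|\Fix(\alpha)|/\binom{r}{s}_2\le 2^{1-\min\{s,r-s\}}$. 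Your second paragraph (the ``main obstacle'' discussion) gestures toward this transvection case, and the heuristic $\binom{r-t}{s'}_2\cdot 2^{O(t^2)}$ is in the right direction, but you never establish it; the paper's exact count is what makes the argument go through.
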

\begin{proof}
Let $\alpha\in T\setminus\{1\}$, let $\Fix(\alpha)=\{W\leq V\mid W^\alpha=W,\,|V:W|=2^s\}$ and let $C=\{v\in V\mid v^\alpha=v\}$. We start by obtaining an upper bound on $|\Fix(\alpha)|$.  Clearly, for every $n\in \mathbb{Z}$, we have $|\Fix(\alpha)|\leq |\Fix(\alpha^n)|$ and hence, by replacing $\alpha$ by a suitable power, we may assume that $\alpha$ has prime order $p$. We now distinguish two cases, depending on whether $p=2$ or $p>3$. In both cases, we obtain an exact formula for $|\Fix(\alpha)|$.

Suppose first that $p=2$.  Note that, as $|\alpha|=2$, every Jordan block of $\alpha$ has size $1$ or $2$ and hence $[V,\alpha]\leq C$. Choose an element $W$ of $\Fix(\alpha)$ and write $U=W+C$. Since $\alpha$ centralises $C$, we have $[U,\alpha]=[W+C,\alpha]=[W,\alpha]$ and hence $[U,\alpha]\leq W\cap C$. It follows that any triple consisting of:
\begin{enumerate}
\item a subspace $U/C$ of $V/C$,
\item a subspace $(W\cap C)/[U,\alpha]$ of $C/[U,\alpha]$, and
\item a complement $W/(W\cap C)$ of $C/(W\cap C)$ in $U/(W\cap C)$
\end{enumerate}
 determines a unique element $W$ of $\Fix(\alpha)$ and conversely. It thus suffices to count the number of possible choices for each of these subspaces. Let $t$ be the number of Jordan blocks of $\alpha$ of size $2$ and let $x=\dim(U/C)$. Then $\dim(V/C)=t$ and hence the number of possible choices for $U/C$ is ${t\choose x}_2$. Note that $\dim([U,\alpha])=\dim([W,\alpha])=\dim(W/(W\cap C))=\dim(U/C)=x$. Moreover $\dim(V/C)=r-t$ and hence $\dim(C/[U,\alpha])=r-t-x$. Next, observe that $\dim(C/(W\cap C))=\dim(U/W)=\dim(V/W)-\dim(V/U)=s-(t-x)=s-t+x$ and hence $\dim((W\cap C)/[U,\alpha])=r-t-x-(s-t+x)=r-s-2x$.

It follows that $t-s\leq x\leq (r-s)/2$ and that the number of possible choices for $(W\cap C)/[U,\alpha]$ is ${r-t-x\choose r-s-2x}_2$. Finally, the number of choices for a complement $W/(W\cap C)$ of $C/(W\cap C)$ is $|C/(W\cap C)|^{\dim(W/(W\cap C))}=2^{(s-t+x)x}$. Summing up,
\begin{equation}\label{eqeq1}
|\mathrm{Fix}(\alpha)|=
\sum_{x\geq \max\{0,t-s\}}^{\min\{t,\lfloor\frac{r- s}{2}\rfloor\}}
{t\choose x}_2{r-t-x\choose r-s-2x}_22^{(s-t+x)x}.
\end{equation}
A rather tedious computation with Eq.~\eqref{eqeq1} shows that the right-hand side attains its maximum when $t=1$, in other words, when $\alpha$ has only one Jordan block of size 2.

Suppose now that $p>2$. Let $\ell$ be the smallest positive integer such that $p$ divides $2^\ell-1$.  Since $|\alpha|$ is coprime to the characteristic of $V$, we have $V=[V,\alpha]+C$ with $[V,\alpha]\cap C=0$.  Let $t$ denote the number of Jordan blocks of $\alpha$ on $[V,\alpha]$. From our choice of $\ell$ and from Schur's lemma, the action of $\alpha$ on $[V,\alpha]$ is  conjugate to a scalar matrix in $\GL_{t}(2^\ell)$ corresponding to a field generator of $\mathbb{F}_{2^\ell}$. In particular, the $\mathbb{F}_2\langle\alpha\rangle$-invariant subspaces of $[V,\alpha]$ are in one-to-one correspondence with the $\mathbb{F}_{2^\ell}$-subspaces of the $t$-dimensional vector space $\mathbb{F}_{2^\ell}^t$. 

Choose an element $W$ of $\Fix(\alpha)$.  As $p\neq 2$, the coprime action of $\alpha$ on $W$ gives $W=[W,\alpha]+(W\cap C)$. From the previous paragraph, any pair consisting of:
\begin{enumerate}
\item an $\mathbb{F}_{2^\ell}$-subspace of $\mathbb{F}_{2^\ell}^t$ corresponding to $[W,\alpha]$, and 
\item an $\mathbb{F}_2$-subspace $W\cap C$ of $C$
\end{enumerate}
determines a unique element $W$ of $\Fix(\alpha)$ and conversely. It thus suffices to count the number of possible choices for each of these subspaces. The number of choices for an $\mathbb{F}_{2^\ell}$-subspace of $\mathbb{F}_{2^\ell}^t$ of dimension $x$ is ${t\choose x}_{2^\ell}$. Choosing such a subspace of dimension $x$ corresponds to choosing $[W,\alpha]$ of dimension $x\ell$ and hence $\dim(W\cap C)=r-s-x\ell$. Note that $\dim (C)=r-t\ell$ and hence the number of choices of an $\mathbb{F}_2$-subspace of $C$ of dimension $r-s-x\ell$ is ${r-t\ell\choose r-s-x\ell}_2$. Moreover, $0\leq r-s-x\ell\leq r-t\ell$, that is $t-s/\ell\leq x\leq (r-s)/\ell$.  Summing up,
\begin{equation}\label{eqeq2}
|\mathrm{Fix}(\alpha)|=
\sum_{x\geq \max\{0,t-s/\ell\}}^{\min\{t,(r-s)/\ell\}}
{t\choose x}_{2^\ell}
{r-t\ell\choose r-s-x\ell}_2.
\end{equation}
Another rather tedious computation with Eq.~\eqref{eqeq2} shows that the right-hand side attains its maximum when $t=1$ and $\ell=2$. Moreover, this maximum is less than the maximum of the right-hand side of Eq.~\eqref{eqeq1}. This shows that the maximum of $|\Fix(\alpha)|$ as $\alpha$ runs over the  elements of prime order of $\mathrm{GL} (V)$ is achieved when $\alpha$ is  an involution having only one Jordan block of size $2$, in other words, when $\alpha$ is a transvection. This is the case $t=1$ in Eq.~\eqref{eqeq1} and hence
\begin{equation}\label{eqeq3}
|\Fix(\alpha)|\leq {r-1\choose s-1}_2+{r-2\choose s}2^s, \textrm{ for every $\alpha\in T\setminus\{1\}$}.
\end{equation}
Denote by $f_{r,s}$ the right-hand side of Eq.~\eqref{eqeq3}. We have

\begin{eqnarray*}
\frac{f_{r,s}}{\left(\begin{array}{c}r\\s\end{array}\right)_2}&=&\left(\frac{2^s-1}{2^r-1}+\frac{(2^{r-s}-1)(2^{r-s-1}-1)}{(2^{r}-1)(2^{r-1}-1)}2^{s}\right)\\
&\leq&\left(\frac{1}{2^{r-s}}+\frac{1}{2^{s}}\right)\leq 2\cdot\frac{1}{2^{\min\{r-s,s\}}}=2^{1-\min\{r-s,s\}}.
\end{eqnarray*}
It follows that
\begin{eqnarray*}
\frac{|\mathcal{N}_{r,s}|}{\left(\begin{array}{c}r\\s\end{array}\right)_2}&\geq& \frac{\left(\begin{array}{c}r\\s\end{array}\right)_2-(|T|-1)f_{r,s}}{\left(\begin{array}{c}r\\s\end{array}\right)_2}=1-(|T|-1)\frac{f_{r,s}}{\left(\begin{array}{c}r\\s\end{array}\right)_2}\\
&\geq&1-(|T|-1)2^{1-\min\{r-s,s\}}.
\end{eqnarray*}
Recall that $T=O(1)$ and ${r\choose s}_2\geq 2^{s(r-s)}$ and the lemma follows.
\end{proof}

\section{The proof of Theorem~\ref{theo:5arc}}\label{sec:5AT}
The study of $3$-valent $s$-arc-transitive graphs was initiated in 1947 by Tutte~\cite{tutte1} who proved that $s\leq 5$. Tutte also constructed the first example of a $3$-valent $5$-arc-transitive graph, a graph of order $30$ known as the Tutte-Coxeter graph or Tutte eight-cage.

According to Biggs, the first example on an infinite family of $3$-valent $5$-arc-transitive graphs was given by Conway~\cite[p.130]{Biggsbook}. Since then, many other constructions have been found but they are usually rather ``sparse" in the sense that they do not yield many graphs up to a given order. It was unclear whether this was due to the actual sparseness of this family of graphs or simply to our lack of understanding. Theorem~\ref{theo:5arc} settles this question by showing that $3$-valent $5$-arc-transitive graphs are rather frequent, at least in the asymptotical sense.

\begin{proof}[Proof of Theorem~\ref{theo:5arc}]
Let $\Delta$ be the Tutte $8$-cage and let $H=\mathrm{Aut}(\Delta)$. Choose a vertex $v$ of $\Delta$ and let $w$ be a neighbour of $v$. Define $A=H_v$, $B=H_{\{v,w\}}$, $C=H_{vw}$ and $G=A\ast_CB $. Let $\pi:G\to H$ be the natural projection  and let $N$  be the kernel of $\pi$. Observe that, by construction, $\pi_{|A}$ and $\pi_{|B}$ are injective and hence $N$ intersects $A$ and $B$ trivially. Therefore, by~\cite[Proposition~I.$5.4$]{Wood}, $N$ is a free group.  The group $G$ has a natural action as a transitive group of automorphisms of the infinite $3$-valent tree $T$. As $N\unlhd G$ and $|G:N|=|H|<\infty$, we see that $N$ has a finite number of orbits on $\V(T)$. Since $T$ is $3$-valent, this forces $N$ to have  rank at least $2$.

As $|C|=|H_{vw}|=16$, we see that $C$ is $4$-generated. Moreover, as $C$ is a maximal subgroup of $A$ and of $B$, it follows that $G$ is $6$-generated. Define
$$\mathcal{N}_n=\{M\unlhd G\mid M\leq N,\,|G:M|\leq 48n\}.$$
Let $p$ be a prime coprime to $|G:N|$ and observe that, since $N$ is a free group of rank at least $2$, the pro-$p$-completion of $N$ is a free pro-$p$-group of rank at least $2$. Thus~\cite[Theorem~$1$]{MP} yields that there exist two positive constants $a'$ and $b'$ such that  $|\mathcal{N}_n|\geq n^{a'\log n}-b'$. (We thank A.~Mann for pointing out this reference to us.)

Now we recall the definition of coset graph. For a group $G$, a subgroup $A$ and an element $b \in G$, the coset graph $\mathrm{Cos}(G, A, b)$ is the graph with vertex set the set of right cosets $G/A =\{ Ag\mid g\in G\}$ and edge set $\{\{ Ag,Abg\}\mid g\in G\}$.

Let $M\in \mathcal{N}_n$, let $b\in B\setminus C$ and define $\Gamma=\mathrm{Cos}(G/M,AM/M,bM)$. Since $M\leq N$, the graph $\Gamma$ is a regular cover of $\Delta$. Moreover, since $G/M$ acts as a group of automorphisms of $\Gamma$ with vertex-stabilisers isomorphic to $AM/M\cong A/(A\cap M)\cong A=H_v$, it follows that $\Gamma$ is $5$-arc-transitive. As $|H_v|=48$, it follows that $$|\V(\Gamma)|=\frac{|G:M|}{|AM:M|}\leq \frac{48n}{48}=n$$ and that $|G/M|=48|\V(\Gamma)|$. As the vertex-stabiliser of a  $3$-valent $5$-arc-transitive graph has order $48$~\cite{tutte1,tutte2}, we have $|\mathrm{Aut}(\Gamma)|=48|\V(\Gamma)|$ and hence  $\mathrm{Aut}(\Gamma)=G/M$. Summing up, we have shown that every element $M$ of $\mathcal{N}_n$ determines a $3$-valent $5$-arc-transitive graph with automorphism group $G/M$.

Finally, for $i\in \{1,2\}$, let $M_i\in \mathcal{N}_n$, let $b_i\in B\setminus C$ and define the graph $\Gamma_i=\mathrm{Cos}(G/M_i,AM_i/M_i,b_iM_i)$.  If $\Gamma_1\cong \Gamma_2$, then $\mathrm{Aut}(\Gamma_1)\cong \mathrm{Aut}(\Gamma_2)$ and hence $G/M_1\cong G/M_2$. The number of normal subgroups $M$ of $G$ with $G/M\cong G/M_1$ is equal to the number of surjective homomorphisms from $G$ to $G/M_1$. Since $G$ is $6$-generated, the number of such homomorphisms is at most $|G/M_1|^6\leq(48n)^6$. We conclude that the number of isomorphism classes of $3$-valent $5$-arc-transitive graphs of order at most $n$ is at least $|\mathcal{N}_n|/(48n)^6\geq (n^{a'\log n}-b')/(48n)^6\geq n^{a\log n}-b$, for some $a,b>0$.
\end{proof}

\noindent\textbf{Remark.} The proof of Theorem~\ref{theo:5arc} only relies on a few properties of $3$-valent $5$-arc-transitive graphs and hence the hypothesis of Theorem~\ref{theo:5arc} could be considerably weakened with only a little more effort. We chose not to do it here to avoid too large a digression but we plan to return to this question in future work and decided to leave Theorem~\ref{theo:5arc} as a teaser.

\thebibliography{99}
\bibitem{BabaiAndMoron} L.~Babai, C.~D.~Godsil, On the automorphism groups of almost all Cayley graphs, \textit{European J. Combin.} \textbf{3} (1982), 9--15.

\bibitem{Biggsbook} N.~L.~Biggs, \textit{Algebraic Graph Theory}, Cambridge University Press, London, 1974.

\bibitem{Ne} S.~R.~Blackburn, P.~M.~Neumann, G.~Venkataraman, \textit{Enumeration of finite groups}, Cambridge Tracts in Mathematics, \textbf{173}, Cambridge University Press, Cambridge, 2007.

\bibitem{BK}R.~M.~Bryant, L.~G.~Kov\'acs, Lie representations and groups of prime power order, \textit{J. London Math. Soc. }\textbf{17} (1978), 415--421.

\bibitem{Wood}W.~Dicks, M.~J.~Dunwoody, \textit{Groups actings on graphs}, Cambridge studies in advanced mathematics \textbf{17}, Cambridge University Press, Cambridge, 1989.

\bibitem{Andrei} A.~Jaikin-Zapirain, The number of finite $p$-groups with bounded number of generators, \textit{Finite groups 2003}, 209--217, Walter de Gruyter GmbH \& Co. KG, Berlin, 2004.

\bibitem{Koch}H.~Koch, \"Uber die Faktorgruppen einer absteigenden Zentralreihe, \textit{Math. Nachr. }\textbf{22} (1960), 159--161.

\bibitem{Lazard}M.~Lazard, Sur les groupes nilpotents et les anneaux de Lie, \textit{Ann. Sci. \'Ecole Norm. Sup. } \textbf{71} (1954), 101--190.

\bibitem{Li} C.~H.~Li, The solution to a problem of Godsil on Cubic Cayley Graphs, \textit{Journal of Combinatorial Theory Series B} \textbf{72} (1998), 140--142.

\bibitem{Alex} A.~Lubotzky, Enumerating Boundedly Generated Finite Groups, \textit{J. Algebra} \textbf{238} (2001), 194--199.

\bibitem{Mann} A.~Mann, Enumerating finite groups and their defining relations, \textit{J. Group Theory} \textbf{1} (1998), 59--64.

\bibitem{IN} A.~McIver, P.~M.~Neumann, Enumerating finite groups, \textit{Quart. J. Math. Oxford} \textbf{38} (1987), 473--488.

\bibitem{McKayPraeger} B.~McKay, C.~E.~Praeger, Vertex-transitive graphs which are not Cayley graphs. I,  \textit{J. Austral. Math. Soc. Ser. A} \textbf{56}  (1994), 53--63.

\bibitem{MP}T.~W.~M\"{u}ller, J.-C.~Schlage-Puchta, Normal growth of large groups, II, \textit{Arch. Math. } \textbf{84} (2005), 289--291.

\bibitem{Neumann} P.~M.~Neumann, An enumeration theorem for finite groups, \textit{Quart. J. Math. Oxford } \textbf{20} (1969), 395--401.

\bibitem{lost} P.~Poto\v{c}nik, P.~Spiga, G.~Verret, Bounding the order of the vertex-stabiliser in $3$-valent vertex-transitive and $4$-valent arc-transitive graphs, arXiv:1010.2546v1 [math.CO].

\bibitem{census} P.~Poto\v{c}nik, P.~Spiga, G.~Verret, Cubic vertex-transitive graphs on up to $1280$ vertices,  \textit{Journal of Symbolic Computation}, \url{http://dx.doi.org/10.1016/j.jsc.2012.09.00}.

\bibitem{Pyber} L.~Pyber, Enumerating finite groups of given order, \textit{Ann. of Math. }\textbf{137} (1993), 203--220.

\bibitem{pablo} P.~Spiga, Enumerating groups acting regularly on the $d$-dimensional cube, \textit{Communications in Algebra} \textbf{37} (2009), 2540--2545.

\bibitem{tutte1}W.~T.~Tutte, A family of cubical graphs, \textit{Proc. Camb. Phil. Soc. }\textbf{43} (1947), 459--474.

\bibitem{tutte2}W.~T.~Tutte, On the symmetry of cubic graphs, \textit{Canad. J. Math.} \textbf{11} (1959), 621--624.
\end{document}